\newcommand{\upth}{\textnormal{th}}
\newcommand{\upst}{\textnormal{st}}
\newcommand\omitstuff[1]{}
\newcommand{\deriv}{\frac{d}{dq}\Bigr|_{q=1}}
\theoremstyle{plain}
\newtheorem{theorem}{Theorem}
\newtheorem{lemma}[theorem]{Lemma}
\theoremstyle{definition}
\newtheorem{observation}[theorem]{Observation}
\newtheorem{exampleblock}[theorem]{Example}
\def\E{\mathbb{E}}
\newcommand{\Sym}{\mathfrak{S}}
\title{On the average number of cycles in conjugacy class products}
\author{Jesse Campion Loth}
\address{Simon Fraser University, Burnaby, Canada}
\email{jesse\textunderscore campion\textunderscore loth@sfu.ca}
\author{Amarpreet Rattan}
\address{Simon Fraser University, Burnaby, Canada}
\email{rattan@sfu.ca}
\date{\today}
\begin{document}

%\begin{abstract}
%We show that for the product of two fixed point free conjugacy classes the
%average number of cycles is always very similar.  The main result is that for a
%randomly chosen pair of permutations of cycle types $\alpha$ and $\beta$, the average
%number of cycles in their product is between $H_n-4$ and $H_n+1$, where $H_n$ is
%the harmonic number, for any partitions $\alpha$ and $\beta$ with no parts of size one.
%\end{abstract}

\begin{abstract}
We show that for the product of two fixed point free conjugacy classes, the
average number of cycles is always very similar.  Specifically, our main result is that for a
randomly chosen pair of fixed point free permutations of cycle types $\alpha$ and $\beta$, the average
number of cycles in their product is between $H_n-3$ and $H_n+1$, where $H_n$ is
the harmonic number.
\end{abstract}

\maketitle

\section{Introduction}
Let $n$ be a positive integer.  For any finite set $A$, let $\Sym_A$ be the
symmetric group on $A$. We also use the notation $[n] := \{1, \ldots, n\}$, and $\Sym_{n} : =
\Sym_{[n]}$.  A \emph{partition of $n$} is a list $\alpha = (\alpha_1, \ldots,
\alpha_k)$ of positive integers such that $\alpha_i \geq
\alpha_{i+1}$ and $\sum_i \alpha_i = n$.  Each $\alpha_i$ is called a
\emph{part}, and the \emph{length} of $\alpha$, denoted $\ell(\alpha)$, is the
number of parts (thus $\ell(\alpha) = k$ above).  For each $0 \leq j \leq \ell(\alpha)$, we define $\alpha'_j
= \sum_{i=1}^j \alpha_i$, with $\alpha'_0 = 0$.   We let $C_\alpha$ be the conjugacy of $\Sym_n$ indexed
by $\alpha$.  We call the permutation $(1\; \cdots\; \alpha_1') \cdots
(\alpha_{\ell(\alpha) - 1}' + 1\; \cdots\; n)$ the \emph{canonical permutation 
of type} $\alpha$.   We further let $H_n : = \sum_{i=1} \tfrac{1}{i}$
be the \emph{$n^\upth$ harmonic number}.  We use the convention that
permutations are multiplied from left to right.
%More generally, if $S =
%	\{s_1, \ldots, s_n\}$, then $\Sym_S$ is the symmetric group on $S$.  For a
%	permutation $\sigma \in \Sym_n$, the permutation in $\Sym_S$
%	\emph{compatible with} $\sigma$ is the permutation that sends
%	$s_{\sigma(i)}$ to $s_{\sigma(i+1)}$.

We start by defining the main statistic of study.  For two partitions $\alpha,
\beta \vdash n$, let $C_{\alpha, \beta}$ be the random variable for the number
of cycles in $\sigma \cdot \omega$, where $\sigma$ and $\omega$ are chosen
uniformly at random from the conjugacy classes $C_\alpha$ and $C_\beta$,
respectively. The goal of this manuscript is to estimate the expected value of
$C_{\alpha,\beta}$ when $\alpha$ and $\beta$ are partitions
without parts equal to 1.  We start by defining a class of maps $m_\pi$ for $\pi
\in \Sym_n$, and then we show how the distribution of faces of $m_\pi$ over $\pi
\in \Sym_n$ allows us to estimate $C_{\alpha, \beta}$.
%\begin{definition} \label{def:basicmap}

    Let $\alpha$ and $\beta$ be partitions without parts of size 1 of the same
	positive integer $n$,
	and
	let $\sigma_0$ and $\omega_0$ be the canonical permutations of type $\alpha$
	and $\beta$, respectively.
    For a permutation $\pi \in \Sym_n$, we
	define a \emph{map}\footnote{Usually maps are defined more generally than we have
		presented here.  See \cite{cori1992maps}.  However, we are only interested in
	this restricted class of maps.} $m_\pi = (D, R, E(\pi))$, where $D$ is a set and $R$
	and $E(\pi)$ are permutations on $D$, which are given as follows. The set
	$D$, whose members are known as \emph{darts}, is defined by
	\begin{equation}\label{eq:defst}
		D := S \cup T, \textnormal{ where } S: = \{ s_1, \dots, s_n\} \textnormal{ and } T: = \{t_1, \dots, t_n \}. 
	\end{equation}
The set of darts can be thought of as a set of half-edges in a graph whose
left side (right side) has $\ell(\alpha)$ ($\ell(\beta)$) vertices, and the
$i^\upth$ vertex on the left side (right side) has the darts $s_{\alpha'_i + 1},
\ldots, s_{\alpha'_{i+1}}$ ($t_{\beta'_i +1}, \ldots, t_{\beta'_{i+1}})$ at it.  The permutation
    $E(\pi)$ is a fixed point free involution on the set $D$ given by
\begin{equation}\label{eq:defepi} 
        E(\pi) := (s_1 \, t_{\pi(1)}) (s_2 \, t_{\pi(2)}) \dots (s_n \,
	t_{\pi(n)}).
\end{equation} 
    Thus $E(\pi)$, also known as the \emph{edge scheme},  is in $\Sym_{D}$ and gives a perfect matching between the sets $S$
	and $T$;  the set $E(\pi)$ can be thought of as edges, where, for each $i$, we join the
	darts $s_i$ and $t_{\pi(i)}$ (half-edges) to form a complete edge.  The edge scheme along with the vertices thus
	give a bipartite graph $G$.   The permutation
	$R \in \Sym_{D}$ sends 
\begin{equation}\label{eq:compatible}
	s_{i} \rightarrow s_{\sigma_0(i)} \textnormal{ and }
	t_{i} \rightarrow  t_{\omega_0(i)};  
\end{equation}
that is,

%        \begin{align*}
%			\sigma_0 &= (s_1 \, s_2 \dots s_{\alpha_1'}) (s_{\alpha_1' +1}, \dots
%		s_{\alpha_2'}) \dots (s_{\alpha_{\ell(\alpha)-1}' + 1}, \dots, s_n)
%		\;\; \textnormal{ and }\\
%			\omega_0 &= (t_1 \, t_2 \dots t_{\beta_1'}) (t_{\beta_1' +1}, \dots
%		 t_{\beta_2'}) \dots (t_{\beta_{\ell(\beta)-1}' + 1}, \dots, t_n).
%    \end{align*}
\begin{equation}\label{eq:defrot}
    \begin{multlined}
	R := (s_1 \, s_2 \dots s_{\alpha_1'}) (s_{\alpha_1' +1}, \dots
		s_{\alpha_2'}) \dots (s_{\alpha_{\ell(\alpha)-1}' + 1}, \dots, s_n)
		\cdot\\
		(t_1 \, t_2 \dots t_{\beta_1'}) (t_{\beta_1' +1}, \dots
		 t_{\beta_2'}) \dots (t_{\beta_{\ell(\beta)-1}' + 1}, \dots, t_n),
	\end{multlined}
\end{equation}
    and its cycle type is $\alpha \cup \beta$, where $\alpha
    \cup \beta$ is the partition of $2n$ with parts in $\alpha$ or $\beta$.
    The permutation $R$, also called the \emph{rotation scheme}, encodes an
    embedding of the bipartite graph $G$; when drawn on an orientable surface,
	the cycles of $R$ give the local
    clockwise order in which the darts appear at each vertex.
	Thus the map $m_\pi = (D, R, E(\pi))$ is abstractly defined via
\eqref{eq:defst}, \eqref{eq:defepi} and \eqref{eq:defrot}, but it can be
visualized as a graph embedded on a surface.  In fact, the rotation and edge
schemes uniquely determine a \emph{2-cell embedding} of the graph
	on some orientable surface\footnote{A \emph{2-cell embedding} of a
		connected graph on
		an orientable surface is an embedding of a graph on a surface
		such that edges do not cross, and the removal of the graph from
		the surface leaves connected components each homeomorphic to a
	disc.  When the permutations $R$ and $E(\pi)$ generate an intransitive
	subgroup of $\Sym_D$, the associated graph is disconnected.   In that
	case, we take each connected component, each of which corresponds to an
	orbit of the action of the group generated by $R$ and $E(\pi)$ on $D$, and we embed
	each component as a 2-cell embedding on a suitable orientable surface.
}.
	Thus the faces (and genus 
	via Euler's characteristic formula) of the embedding are completely determined by the rotation and edge
	schemes.  The correspondence between maps and graphs embedded on
	surfaces is expounded in Cori and Machi \cite[Section
		1]{cori1992maps} and Tutte \cite{mapstutte}\footnote{Both \cite{mapstutte} and
			\cite{cori1992maps} consider the action on darts in the
			order $E(\pi)$ and then $R$, opposite to our convention.
			This produces the obvious modification to how faces are
			traversed by $R$ and $E(\pi)$.  In \cite{mapstutte}, the
			author calls maps whose underlying graphs are not
		connected \emph{premaps}.}.   In light of this correspondence, we
	define the \emph{vertices, edges} and \emph{faces} of the map $m_\pi$  to be the
	vertices, edges and faces, respectively, of the associated embedded graph.

	The faces of $m_\pi$ can easily be obtained from $R$ and $E(\pi)$;  in fact,
	they correspond to the cycles of $R \cdot E(\pi)$, and this can be seen as
	follows.  Analyzing the
	effect of the product $R \cdot E(\pi)$ on $s_i$, the permutation $R$ rotates $s_i$ to $s_{\sigma_0(i)}$; then $E(\pi)$ sends $s_{\sigma_0(i)}$ along the
	edge $\{s_{\sigma_0(i)}, t_{\pi(\sigma_0(i))}\}$ to $t_{\pi(\sigma_0(i))}$.
	It follows that the repeated effect of $R$ and $E(\pi)$ on $s_i$ is 
	\begin{equation}\label{eq:connection}
		s_i \stackrel{R}{\rightarrow} s_{\sigma_0(i)}
		\stackrel{E(\pi)}{\rightarrow} t_{\pi( \sigma_0(i))}
		\stackrel{R}{\rightarrow} t_{\omega_0(\pi( \sigma_0(i)))}
		\stackrel{E(\pi)}{\rightarrow} s_{\pi^{-1}(\omega_0(\pi(
		\sigma_0(i))))}.
	\end{equation}
	Thus we see that $R \cdot E(\pi)$ traces out the walk $s_i \rightarrow t_{\pi(
	\sigma_0(i))}  \rightarrow s_{\pi^{-1}(\omega_0(\pi( \sigma_0(i))))}
	\rightarrow \cdots$ along a face of the embedding of $m_\pi$, and that
	$(s_i\, t_{\pi( \sigma_0(i))}\,  s_{\pi^{-1}(\omega_0(\pi( \sigma_0(i))))}
	\cdots)$ is a cycle of $R \cdot E(\pi)$.  This
	correspondence is also explained in depth in \cite{cori1992maps, mapstutte}, while Example \ref{ex:mapproduct} contains a full illustration.     We note that the graph associated
	to the map $m_\pi$ may be disconnected, in
which case we treat each connected component (separately) as a 2-cell
embedding.  In that case, we follow the convention that the faces in
$m_\pi$ is the union of the faces of the connected components. This
convention preserves the correspondence between cycles of $R \cdot E(\pi)$ and
faces of $m_\pi$.  It follows that the vertices, edges and faces of $m_\pi$
correspond to the cycles of $R$, the cycles $E(\pi)$, and the cycles of $R
\cdot E(\pi)$, respectively.  In particular, we have
\begin{equation}\label{eq:facescycles}
	\# \textnormal{ of faces of } m_\pi = c(R \cdot E(\pi)).
\end{equation}
\begin{figure} 
	\centering
	\label{fig:mapproduct}
	\includegraphics[scale=1]{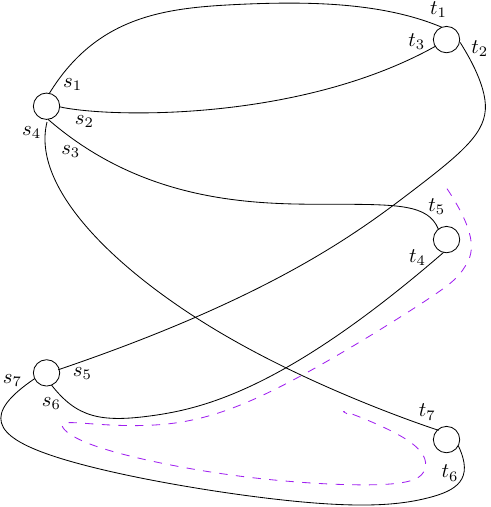}
	\caption{Pictured is an embedding of the graph corresponding to the map $m_\pi$ from Example \ref{ex:mapproduct}.  Here we have $\alpha = (4,3)$, $\beta = (3,2,2)$ and $\pi = (1)(2 \, 3 \, 5)(4 \,
	7 \, 6)$.  Crossings of edges are not true crossings as the embedding is on
	a surface of appropriate genus.  The embedding has two faces, so the graph is embedded on a surface
of genus $g = \tfrac{1}{2}(2 - 5+7-2) =1$.  The walk (in dashed purple) along a
face is described as a part of a cycle of $R \cdot E(\pi)$ in Example
\ref{ex:mapproduct}.}
%	An example of the correspondence between general conjugacy
%		class products and bipartite maps.  Here $\alpha = (4,3)$ and $\beta =
%		(3,2,2)$.  Also we have a map $m_\pi$
%		with $\pi = (1)(2 \, 3 \, 5)(4 \, 7 \, 6)$, and \\
%    $R = (s_1 \, s_2 \, s_3\, s_4)(s_5 \, s_6 \, s_7) \cdot (t_1 \, t_2 \, t_3) (t_4 \, t_5) (t_6 \, t_7)$\\
%    $E(\pi) = (s_1 \, t_3)(s_2 \, t_3)(s_3 \, t_5)(s_4 \, t_7)(s_5 \, t_2)(s_6 \, t_4)(s_7 \, t_6)$ \\
%    $R \cdot E(\pi) = (s_1\, t_3)(s_2 \, t_5 \, s_6 \, t_6 \, s_4 \, t_1\, s_5 \, t_4\, s_3 \, t_7 \, s_7 \, t_2)$. \\
%    Here $\sigma_0 = (1 \, 2 \, 3\, 4)(5 \, 6 \, 7) \in C_{\alpha}$ and $\omega_0
%	= (1 \, 2\, 3)(4 \, 5)(6 \, 7) \in
%	C_{\beta}$.  Then $R \cdot E(\pi)$ directly corresponds with the following product: \\
%    $(1 \, 2 \, 3\, 4)(5 \, 6 \, 7) \cdot (1)(2 \, 3 \, 5)(4 \, 7 \, 6) \cdot (1
%	\, 2 \, 3)(4 \, 5)(6 \, 7) \cdot [(1)(2 \, 3 \, 5)(4 \, 7 \, 6)]^{-1} =
%	(1)(2 \, 6 \, 4\, 5\, 3\, 7)$.
%Note that the cycles of $\sigma_0 \pi \omega_0 \pi^{-1}$ can be obtained from the cycles
%of $R \cdot E(\pi)$ by simply ignoring the symbols in $T$.}
\end{figure}

%we will explain the details of this correspondence that we shall need in Lemma \ref{lem:processequiv}.  
A second important fact can be observed from \eqref{eq:connection}:  the cycles
of $R\cdot E(\pi)$ naturally correspond to the cycles in $\sigma_0 \pi \omega_0
\pi^{-1}$.    To see why, consider the natural action of a permutation in
$\Sym_n$ acting on the symbols in $S$ and $T$ (the action of permuting indices).   It follows from the computation in \eqref{eq:connection} that if we
ignore the symbols in $R \cdot E(\pi)$ in $T$, the remaining permutation on the
symbols in $S$ is precisely the permutation $\sigma_0 \pi \omega_0 \pi^{-1}$
acting on the symbols in $S$ (our left to right convention for multiplying
permutations gives $(\sigma_0 \pi \omega_0 \pi^{-1}) (i)  = \pi^{-1}(\omega_0(\pi(
\sigma_0(i))))$).   Equivalently, the permutation $\sigma_0 \pi
\omega_0 \pi^{-1}$ acting on $S$ gives precisely half of the cycles of the permutation
$(R \cdot E(\pi))^2$;  the other cycles are given by the permutation $\omega_0
\pi^{-1} \sigma_0 \pi$ acting on the symbols in $T$.   This correspondence\footnote{The claim
	about the connection between the cycles of $\sigma_0 \pi \omega_0 \pi^{-1}$
	and $R \cdot E(\pi)$ remains true if we replace $\sigma_0$ and
	$\omega_0$ by any other
	fixed permutations $\sigma$ and $\omega$ in $C_\alpha$ and $C_\beta$,
	respectively, as long as the permutation $R$ has cycles compatible with
	$\sigma$ and $\omega$ in the sense of \eqref{eq:compatible}.} is
also
illustrated in Example \ref{ex:mapproduct}.  It follows that if $\sigma_0 \pi \omega_0 \pi^{-1}$ has cycle type $\lambda$, then $R \cdot E(\pi)$
has cycle type $2 \lambda := (2 \lambda_1, 2\lambda_2, \ldots)$, where the 
$\lambda_i$ are the parts of $\lambda$.   In particular,
\begin{equation}\label{eq:cyclessame}
	c(\sigma_0 \pi \omega_0 \pi^{-1}) = c(R \cdot E(\pi)).
\end{equation}

%To see
%why,  as seen above,  under $R \cdot E(\pi)$ the image of $s_i$ is
%	$t_{\pi(i+1)}$; the image of $t_{\pi(i+1)}$ is $s_{\pi^{-1}(\pi(i+1)
%	+1)}$;  so a cycle of $R \cdot E(\pi)$ is $(s_i\, t_{\pi(i+1)}\,
%	s_{\pi^{-1}(\pi(i+1) + 1)} \cdots)$.  Turning to $\sigma_0 \pi \omega_0
%	\pi^{-1}$, standard conjugation gives that $\pi \omega_0 \pi^{-1} =
%	(\pi^{-1}(1)\, \cdots \pi^{-1}(\beta')) \cdots$, so the image of $i$ under
%	$\sigma_0 \pi \omega_0 \pi^{-1}$ is $\pi^{-1}(\pi(i+1) + 1)$ (in this
%	computation, indices are taken cyclically in the relevant cycles).  So
%	we see a cycle in $\sigma_0 \pi \omega_0 \pi^{-1}$ begins with $(i\,\,\,
%	\pi^{-1}(\pi(i+1)+1) \cdots)$.  In this manner, one can show by induction
%	that if $(s_{i_1}\, t_{j_1}\, \cdots s_{i_k}\, t_{j_k})$ is a cycle of $R
%	\cdot E(\pi)$, then a cycle of $\sigma_0 \pi \omega_0 \pi^{-1}$ is $(i_1\,
%	\cdots i_k)$.   That is, the cycles of $\sigma_0 \pi \omega_0 \pi^{-1}$ can
%	be obtained from the cycles of $R \cdot E(\pi)$ by ignoring the symbols in $T$ and
%	interpreting the resulting permutation on the symbols of $S$ as acting
%	on $[n]$.  It follows that
%if $\sigma_0 \pi \omega_0 \pi^{-1}$ has cycle type $\lambda$, then $R \cdot E(\pi)$
%has cycle type $2 \lambda := (2 \lambda_1, 2\lambda_2, \ldots)$, where the 
%$\lambda_i$ are the parts of $\lambda$.  Example \ref{ex:mapproduct} gives a
%full illustration of this correspondence.
\begin{exampleblock}[Permutation/map correspondence]\label{ex:mapproduct}
	Let $\alpha = (4,3)$ and $\beta = (3,2,2)$.  A map $m_\pi = (D, R, E(\pi))$
	is pictured in Figure \ref{fig:mapproduct} with $\pi = (1)(2 \, 3 \, 5)(4 \,
	7 \, 6) \in \Sym_7$.  Here $D = S \cup T$ and
	\begin{align*}
		R &= (s_1 \, s_2 \, s_3\, s_4)(s_5 \, s_6 \, s_7) \cdot (t_1 \, t_2 \,
		t_3) (t_4 \, t_5) (t_6 \, t_7),\\
		E(\pi) &= (s_1 \, t_1)(s_2 \, t_3)(s_3 \, t_5)(s_4 \, t_7)(s_5 \,
		t_2)(s_6 \, t_4)(s_7 \, t_6).\\
	\end{align*}
	Whence
	\begin{equation}\label{eq:rdote}
		R \cdot E(\pi) = (s_1\, t_3)(s_2 \, t_5 \, s_6 \, t_6 \, s_4 \, t_1\,
		s_5 \, t_4\, s_3 \, t_7 \, s_7 \, t_2).
	\end{equation}
	and
	\begin{equation}\label{eq:resquared}
		(R \cdot E(\pi))^2 = (s_1)(t_3)(s_2\, s_6 \, s_4\, s_5\, s_3 \, s_7)(t_5\, t_6\, t_1\, t_4 \, t_7 \, t_2).
	\end{equation}

	Each cycle of $R \cdot E(\pi)$ traces a face of $m_\pi$ in Figure
	\ref{fig:mapproduct} via a walk in the following way.  Pick a symbol, say $t_5$, and
	begin a walk around the map while keeping edges to the right;  begin
	at the vertex containing the dart $t_5$, which is sent to $t_4$ via $R$;
	then $E(\pi)$ sends $t_4$ to $s_6$.
	Thus $R \cdot E(\pi)$ sends $t_5$ to $s_6$, and this is indicated by the
	dashed purple walk.  Continue, via $R$, from $s_6$ to $s_7$;  then $E(\pi)$
	sends $s_7$ to $t_6$.  Thus $R \cdot E(\pi)$, and thus the walk,  sends 
	$s_6$ is $t_6$.  Hence the walk
	traces $\cdots t_5 \rightarrow s_6 \rightarrow t_6 \cdots$.  If one
	continued in this fashion, one would follow the symbols of the larger cycle
	in $R \cdot E(\pi)$ given in \eqref{eq:rdote} along the face indicated by
	the dashed purple walk until $t_5$ is revisited.

	To see the correspondence between $R \cdot E(\pi)$ and the permutations
	$\sigma_0 \pi \omega_0 \pi^{-1}$ and $\omega_0 \pi^{-1} \sigma_0 \pi$, we
	find the canonical permutations $\sigma_0$ and $\omega_0$ of type $\alpha$ and
	$\beta$, respectively, are given by
	\begin{align*}
		\sigma_0 &= (1 \, 2 \, 3\, 4)(5 \, 6 \, 7) \in C_{\alpha},\\
		\omega_0 &= (1 \, 2\, 3)(4 \, 5)(6 \, 7) \in C_{\beta}
	\end{align*}
    We find 
	\begin{align*}
		\sigma_0 \pi \omega_0 \pi^{-1} &= (1 \, 2 \, 3\, 4)(5 \, 6 \, 7) \cdot (1)(2 \, 3 \, 5)(4 \, 7 \, 6) \cdot (1
	\, 2 \, 3)(4 \, 5)(6 \, 7) \cdot [(1)(2 \, 3 \, 5)(4 \, 7 \, 6)]^{-1}\\
				&= (1)(2 \, 6 \, 4\, 5\, 3\, 7)
	\end{align*}
	and
	\begin{equation*}
		\omega_0 \pi^{-1} \sigma_0 \pi = (3)(1\, 4\, 7\, 2\, 5\, 6),
	\end{equation*}
	which the reader is invited to compare to $(R \cdot E(\pi))^2$ in \eqref{eq:resquared}.\qed
\end{exampleblock}

We denote by $M_{\alpha, \beta}$ the set of all such maps:
    $$
        M_{\alpha, \beta} := \{ m_\pi : \pi \in S_n \}
    $$
    Let $U$ be the uniform probability distribution on this set. Then
    $(M_{\alpha, \beta}, U)$ is a probability space.
We use $F(\cdot)$ to denote the number of faces of a map.  From
\eqref{eq:facescycles} and \eqref{eq:cyclessame}, we see that
$F(m_\pi) = c(R \cdot E(\pi))= c(\sigma_0 \pi \omega_0 \pi^{-1})$. Define $F_{\alpha,\beta} : = F$ as the random variable on
 $(M_{\alpha,\beta},U)$ for the number of faces in a map in
 $M_{\alpha,\beta}$.  We now show how $F_{\alpha, \beta}$ is related $C_{\alpha,
 \beta}$.

\begin{lemma} \label{lem:processequiv}
	For any positive integer $n$ and partitions $\alpha$ and $\beta$ of $n$,
    $$\E[F_{\alpha,\beta}] =\E[C_{\alpha,\beta}].$$
\end{lemma}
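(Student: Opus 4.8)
The plan is to unwind both expectations into explicit sums over $\Sym_n$ (or over conjugacy classes) and then match them using the fact that the number of cycles of a permutation is a class function. First I would observe that $\pi \mapsto m_\pi$ is a bijection from $\Sym_n$ onto $M_{\alpha,\beta}$: the darts $D$ and the rotation scheme $R$ are the same for every $\pi$, so $m_\pi$ is determined by $E(\pi) = (s_1\, t_{\pi(1)})\cdots(s_n\, t_{\pi(n)})$, and $E(\pi)$ in turn determines $\pi$. Hence $|M_{\alpha,\beta}| = n!$ and, using that faces of $m_\pi$ are cycles of $R\cdot E(\pi)$ together with the correspondence (recalled in the excerpt) that $c(R\cdot E(\pi)) = c(\sigma_0\pi\omega_0\pi^{-1})$, one gets
\[
    \E[F_{\alpha,\beta}] \;=\; \frac{1}{n!}\sum_{\pi\in\Sym_n} c\bigl(\sigma_0\,\pi\,\omega_0\,\pi^{-1}\bigr),
\]
where $c(\cdot)$ denotes the number of cycles.

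Next I would expand the other side directly from the definition of $C_{\alpha,\beta}$:
\[
    \E[C_{\alpha,\beta}] \;=\; \frac{1}{|C_\alpha|\,|C_\beta|}\sum_{\sigma\in C_\alpha}\sum_{\omega\in C_\beta} c(\sigma\omega).
\]
I would then reduce to a fixed first factor. Given $\sigma\in C_\alpha$, write $\sigma = g\sigma_0 g^{-1}$ for some $g\in\Sym_n$; since $c$ is invariant under conjugation, $c(\sigma\omega) = c\bigl(\sigma_0\,(g^{-1}\omega g)\bigr)$, and as $\omega$ ranges over $C_\beta$ so does $g^{-1}\omega g$. Thus the inner sum equals $\sum_{\omega\in C_\beta} c(\sigma_0\omega)$ independently of $\sigma$, giving
\[
    \E[C_{\alpha,\beta}] \;=\; \frac{1}{|C_\beta|}\sum_{\omega\in C_\beta} c(\sigma_0\,\omega).
\]

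Finally I would match the two formulas via orbit counting: the conjugation map $\pi \mapsto \pi\omega_0\pi^{-1}$ sends $\Sym_n$ onto $C_\beta$ with every fiber of size $|Z_{\Sym_n}(\omega_0)| = n!/|C_\beta|$, so
\[
    \frac{1}{n!}\sum_{\pi\in\Sym_n} c\bigl(\sigma_0\,\pi\,\omega_0\,\pi^{-1}\bigr)
    \;=\; \frac{1}{n!}\cdot\frac{n!}{|C_\beta|}\sum_{\omega\in C_\beta} c(\sigma_0\,\omega)
    \;=\; \frac{1}{|C_\beta|}\sum_{\omega\in C_\beta} c(\sigma_0\,\omega),
\]
which combined with the two displays above yields $\E[F_{\alpha,\beta}] = \E[C_{\alpha,\beta}]$. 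There is no serious obstacle here; the only points needing care are the bijectivity of $\pi\mapsto m_\pi$ (so that the uniform measure $U$ corresponds to the uniform measure on $\Sym_n$) and the two uses of conjugation-invariance — once to fix $\sigma_0$ and once, in the guise of the fiber-size computation, to replace the average over $\Sym_n$ by an average over the class $C_\beta$.
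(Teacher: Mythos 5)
Your proposal is correct and follows essentially the same route as the paper: the bijection $\pi \mapsto m_\pi$, the identification $c(R\cdot E(\pi)) = c(\sigma_0\pi\omega_0\pi^{-1})$, the fiber count $n!/|C_\beta| = z_\beta$ for $\pi \mapsto \pi\omega_0\pi^{-1}$, and conjugation-invariance to pass between the fixed $\sigma_0$ and the full class $C_\alpha$ are exactly the ingredients of the paper's chain of equalities. The only cosmetic difference is that the paper phrases the computation as an identity of generating functions $\sum q^{c(\cdot)}$ differentiated at $q=1$, whereas you sum the cycle counts directly, which is an equivalent and slightly more direct presentation.
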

\begin{proof}
	Note that
\begin{equation*}
	 \E[F_{\alpha,\beta}]= \deriv\, \frac{1}{n!} \sum_{\pi \in S_n} q^{c(R \cdot
	 E(\pi))},
\end{equation*}
while
\begin{equation*}
	\E[C_{\alpha,\beta}] = \deriv\, \frac{1}{\vert C_\alpha \vert \vert C_\beta
	\vert } \sum_{\substack{\sigma \in C_\alpha \\ \omega \in C_{\beta} }}
	q^{c(\sigma \omega)}.
\end{equation*}
With $\sigma_0$ and $\omega_0$ as the canonical permutations of type $\alpha$ and
$\beta$, respectively, we have 
    \begin{align*}
       \frac{1}{n!} \sum_{m_\pi \in M_{\alpha, \beta}} q^{F(m_\pi)} &= \frac{1}{n!} \sum_{\pi \in S_n} q^{c(R \cdot E(\pi))} \\
        &= \frac{1}{n!}\sum_{\pi \in S_n} q^{c(\sigma_0 \pi \omega_0
		\pi^{-1})}\,\,\,
	(\textnormal{from \eqref{eq:cyclessame}})\\
        &= \frac{z_\beta}{n!}  \sum_{\omega \in C_{\beta}} q^{c(\sigma_0 \omega)} \\
        &= \frac{1}{\vert C_\alpha \vert \vert C_\beta \vert}
		\sum_{\substack{\sigma \in C_\alpha \\ \omega \in C_{\beta} }}
		q^{c(\sigma \omega)},
    \end{align*}
	and the result follows.
\end{proof}
Thus studying the number of faces of bipartite maps -- the embedded
graphs associated with maps $(D, R, E(\pi))$ as $\pi$ ranges in $\Sym_n$
-- is equivalent to studying $C_{\alpha,\beta}$.  We use this correspondence to
estimate $E[C_{\alpha,\beta}]$ because it is easier to define our structures and
substructures on maps, specifically on their associated embedded graphs. 

%\cite{chmutov2016surface, pippenger2006topological}

The statistic $C_{\alpha,\beta}$ has been studied previously in some special
cases.  The case when $\alpha = 2^{n/2}$, the partition with all parts equal to
2, models a random surface obtained by gluing together polygonal disks and has
been studied with motivation from physics.  Pippenger and Schleich
\cite{pippenger2006topological} show that $\E[C_{2^{n/2},3^{n/3}}] = H_n +
O(1)$, and also obtain bounds for the variance.  Their results use a
combinatorial analysis of a random process.  More recently, their result was
generalised by Chmutov and Pittel \cite{chmutov2016surface} to any $\beta$ with
parts all of size at least $3$.  They show that the whole distribution of
permutations obtained is asymptotically uniform up to parity.  As a corollary to
their result, it follows that $\E[C_{2^{n/2},\beta}] = H_n + O(1)$ when $\beta$
has parts all of size at least $3$.  Their proof follows a
method of Gamburd \cite{gamburd2006poisson} that uses
the Fourier transform on representations of the symmetric group and recent
bounds on characters given by Larsen and Shalev \cite{larsen2008characters}.

Although it is not explicitly mentioned in their paper, these methods trivially
extend to the case when $\alpha$ has all parts of size at least $2$ and $\beta$
has all parts of size at least $3$.  The same result holds in this case, giving $\E[C_{\alpha,\beta}] = H_n + O(1)$.  The size of the
$O(1)$ term is unclear, as it relies on a very general, powerful character
bound.  However in some special cases more precise bounds are known.  The case
when $\alpha = (n)$ has been studied by various authors, using representation
theory \cite{stanley2011two}, matrix integrals \cite{Ja94}, and more
combinatorial methods Goulden and Jackson \cite{goulden1992combinatorial}, Cori
\emph{et al.} \cite{cori2012odd}, and Boccara \cite{boccara1980nombre}.  These
results are more refined, and give the exact number of products with a given
number of cycles.  From this, we can obtain the average number of cycles.  The
case $\alpha = \beta = (n)$ is especially nice; we have that $\E[C_{(n),(n)}] =
H_{n-1} + \lceil \frac{n}{2}\rceil ^{-1} $.  In the more general case when
$\beta$ has all parts of size at least $2$, it follows from Stanley's generating
function \cite[Theorem 3.1]{stanley2011two} that $H_{n} - \frac{5}{n} \leq \E[C_{(n),\beta}] \leq H_{n} + \frac{3}{n}$   (see \cite{loth2021random}).

In all these known cases, we see that the average number of cycles is close to
$H_n$.  We shall here be determining an estimate in the more general case when
$\alpha$ and $\beta$ can be any partitions with no parts of size one.  We state
our main theorem.
\begin{theorem} \label{thm:mainbound}
	Let $n$ be a positive integer.  For any partitions $\alpha$ and $\beta$ of
	$n$ with no part of size 1, we have $H_n - 3 < \E[C_{\alpha, \beta}] \leq H_n + 1$.
\end{theorem}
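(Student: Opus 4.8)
The plan is to work entirely in the map model, using Lemma~\ref{lem:processequiv} to transfer the estimate on $\E[F_{\alpha,\beta}]$ back to $\E[C_{\alpha,\beta}]$. The basic strategy is to write $F(m_\pi)$ as a sum of indicator random variables over potential faces and compute (or bound) each probability exactly. Concretely, a face of $m_\pi$ is a cyclic sequence of darts closed under $R\cdot E(\pi)$; since $R\cdot E(\pi)$ has cycle type $2\lambda$ when $\sigma_0\pi\omega_0\pi^{-1}$ has type $\lambda$, it is cleaner to count \emph{cycles of $\sigma_0\pi\omega_0\pi^{-1}$ on the symbol set $[n]$} directly. For a cyclic word $w=(i_1\,i_2\,\cdots\,i_k)$ on distinct elements of $[n]$, let $X_w$ be the indicator that $w$ is a cycle of $\sigma_0\pi\omega_0\pi^{-1}$ when $\pi$ is uniform. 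Then $F_{\alpha,\beta}=\sum_{w}X_w$ and $\E[F_{\alpha,\beta}]=\sum_k \tfrac{1}{2k}\sum_{(i_1,\dots,i_k)} \Pr[X_w=1]$, the factor $\tfrac1{2k}$ accounting for the cyclic and dihedral symmetry of how a $k$-cycle is written (or just $\tfrac1k$ if we are careful to count directed cyclic words). The key computation is $\Pr[X_w=1]$ as a function only of $k$ and of how the entries $i_1,\dots,i_k$ distribute among the cycles of $\sigma_0$ and of $\omega_0$ — i.e. among the vertices of the bipartite map — because $\sigma_0\pi\omega_0\pi^{-1}$ having $w$ as a cycle is a condition on where $\pi$ sends the relevant symbols, and counting such $\pi$ reduces to a product of falling factorials coming from the part sizes of $\alpha$ and $\beta$.

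Then I would split the sum by $k$. The dominant contribution comes from short cycles, and the first step is to nail down the small cases. For $k=1$: a fixed point $i$ of $\sigma_0\pi\omega_0\pi^{-1}$ corresponds, in the map, to a face of degree $2$, i.e. a bigon; one expects $\E[\#\text{fixed points}]$ to be close to $1$ but sensitive to $\alpha,\beta$ (this is exactly where the constants $-3$ and $+1$ will come from), and the condition "$\alpha,\beta$ have no part equal to $1$" is what keeps this under control — a part of size $1$ would force structure that blows up the count. I would compute $\E[\#\text{fixed points}]$ exactly, or at least bound it within a constant of $1$, handling the cases where a symbol lies in a short cycle of $\sigma_0$ or $\omega_0$ separately. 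For general $k\ge 2$, the number of directed cyclic words is $n(n-1)\cdots(n-k+1)$, and $\Pr[X_w=1]\le$ something like $\prod (\text{stuff})^{-1}$; summing, one wants each $k\ge 2$ to contribute approximately $\tfrac1k$ (matching the $H_n$ main term), with total error over all $k\ge 2$ bounded by an absolute constant. So the skeleton is: $\E[F_{\alpha,\beta}] = (\text{fixed-point term, within a constant of }1) + \sum_{k\ge 2}\left(\tfrac1k + \varepsilon_k\right)$ with $\sum_{k\ge2}|\varepsilon_k| = O(1)$ and a sign-controlled remainder, and then $H_n = 1 + \sum_{k\ge 2}\tfrac1k$ gives the claim after bookkeeping the explicit constants $-3$ and $+1$.

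The main obstacle is controlling $\Pr[X_w=1]$ uniformly over all placements of the $k$ symbols among the vertices of the map. The probability is not a clean function of $k$ alone: if several of the $i_j$ lie in the same cycle of $\sigma_0$ (a single vertex on the left) or the same cycle of $\omega_0$, the count of valid $\pi$ is suppressed or enhanced, and one must show these deviations telescope to a bounded correction. I expect the clean way to organize this is: fix the multiset of "visits" the cyclic word makes to each vertex, observe that $\sigma_0\pi\omega_0\pi^{-1}$ acting as the cycle $w$ imposes that consecutive symbols $i_j, i_{j+1}$ are linked by the combinatorial data $\sigma_0(x) = i_j$, $\pi(x)=y$, $\omega_0(y)=z$, $\pi^{-1}(z)=i_{j+1}$ for suitable intermediate symbols, so the event decomposes edge-by-edge along the cycle; then the count of compatible $\pi$ is a ratio of factorials depending only on part sizes and on how many times $w$ hits each part. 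Summing the resulting expression over all cyclic words — equivalently, a sum over multisets of parts with multiplicities, weighted by multinomial coefficients — should, after the algebra, collapse to $H_n$ plus an explicitly boundable remainder; a generating-function identity in a variable $q$ tracking cycle lengths (as hinted by the $\frac{d}{dq}\big|_{q=1}$ notation and the connection to known $\alpha=(n)$ formulas) is the natural tool to make the collapse transparent. The upper bound $H_n+1$ and lower bound $H_n-3$ would then follow by keeping the remainder's sign and crude size, with the asymmetry of the two constants coming from the one-sided behavior of the fixed-point ($k=1$) term.
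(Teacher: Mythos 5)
Your strategy (a first-moment computation: write $F_{\alpha,\beta}$ as a sum of indicators $X_w$ over potential cycles $w$ and bound $\sum_w \Pr[X_w=1]$) is genuinely different from the paper, which never computes per-cycle probabilities at all; instead it builds the map one edge at a time by two sequential random processes and bounds the expected number of faces completed at each of the $n$ steps, the combinatorial control coming from showing that at most two ``bad darts'' and at most one ``mixed partial face'' can ever be present. But as written your proposal has a genuine gap: the entire difficulty of the theorem is concentrated in the step you describe as ``should, after the algebra, collapse to $H_n$ plus an explicitly boundable remainder,'' and nothing in the proposal actually performs or even sets up that computation. Note that $X_w=1$ is exactly the event that a uniform $\omega\in C_\beta$ takes each $i_j$ to the prescribed value $\sigma_0^{-1}(i_{j+1})$, and the probability of prescribing $k$ images of a permutation of \emph{fixed cycle type} is not close to $1/\bigl(n(n-1)\cdots(n-k+1)\bigr)$ uniformly: it depends on how the chains and closed cycles of the prescribed partial map sit relative to the parts of $\beta$ (and, through $\sigma_0^{-1}$, of $\alpha$), and individual words can be enhanced by factors of order $n$ (e.g.\ for $\beta=2^{n/2}$, prescribing $\omega(a)=b$ and $\omega(b)=a$ has probability of order $1/n$, not $1/n^2$) or killed entirely. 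So the claim that the deviations $\varepsilon_k$ ``telescope to a bounded correction,'' uniformly over all fixed-point-free $\alpha,\beta$ and with the explicit constants $-3$ and $+1$, is precisely the theorem to be proved, not a bookkeeping afterthought; no mechanism (generating-function identity, exact formula for $\Pr[X_w=1]$, or cancellation argument) is supplied to establish it.

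A secondary but substantive point: you locate the constants $-3$ and $+1$ in the $k=1$ (fixed-point) term, but there is no argument that the $k\ge 2$ contributions sum to $\sum_{k\ge 2}\tfrac1k$ with error smaller than these constants; in the paper the asymmetry of the two bounds comes from entirely different sources (an ``open run'' correction of at most $1$ in the upper bound via RPA, and the accumulated losses $\sum_k O\bigl(1/((n-k)(n-k+1))\bigr)$ from the probability of the partial map being bad in the lower bound via RPB). Also, the factor $\tfrac{1}{2k}$ for ``dihedral symmetry'' is incorrect — cycles of a permutation are directed, so only the cyclic factor $\tfrac1k$ applies — though you hedge on this. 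In short: the route you sketch is conceivable but all of its load-bearing estimates are left unproven, so the proposal does not constitute a proof.
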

Our theorem eliminates the asymptotic notation present in the more general
results previously mentioned.  The method of proof will also be more
combinatorial, using random processes on maps.

\section{Basic terminology and random processes}
We give some definitions and terminology that we shall make extensive use of throughout the paper. 

Fix $\alpha$ and $\beta$ to be partitions of $n$ with no parts of size 1.  A partial map is a triple $m =
(D,R,E)$, where $D$ and $R$ are defined as they were for maps, but where $E \in
\Sym_{D}$ is
    an involution (not necessarily fixed point free).  The involution can be
	represented by an injection $\pi : X \rightarrow [n]$, where
	$X \subseteq [n]$.  Then we can set $$E(\pi) := \prod_{i \in X} (s_i \, t_{\pi(i)}) $$
    and define the partial map $m_\pi= (D,R,E(\pi))$. 
%\end{definition}
Notice that a map is itself a partial map.  In this case $E$ will be a fixed
point free involution, and the associated function $\pi$ will be a permutation on
$\{1,2, \ldots, n\}$.  Also when $E$ is the identity (the associated function $\pi$
has the empty set as its domain) the partial map is just a set
of darts at vertices with no faces or edges.  These are the two extreme cases.

Fix an injection $\pi : X \rightarrow [n]$ for some subset $X \subseteq [n]$,
and let $m=m_\pi =(D,R,E(\pi))$ be a partial map, and write $E = E(\pi)$.
Figure \ref{fig:firstuexample} contains some examples of the definitions given
below.
    \begin{itemize}
        \item \emph{Paired/unpaired darts}: Each dart in $m$ is one of the symbols in $S$ or $T$.  We have two types of dart:
        \begin{itemize}
            \item Paired dart: A dart in a partial map that is part of an edge,
		    \emph{i.e} a dart that is in a $2$-cycle in $E$. 
            \item Unpaired dart: A dart in a partial map that is not part of an
		    edge, \emph{i.e} a dart that is a fixed point in $E$.  Let $S^u, T^u$ be the set of unpaired darts in $S,T$ respectively.
        \end{itemize}
        \item \emph{Completed face}: A cycle in $R \cdot E$ only containing
			paired darts.  Let $F(m)$ denote the number of completed faces
			in the partial map.\
        \item \emph{Unpaired permutation/partial face/length}: For 
			$m$, the induced permutation of $R
			\cdot E$ on $S^u \cup T^u$ is called the \emph{unpaired
			permutation}, which we denote $u_m$.  That is, we
		remove all paired darts, and any resulting empty cycles, from $R \cdot
		E$ to obtain $u_m$.  We write $u=u_m$ when $m$ is clear from the
		context.   A \emph{partial face} is a cycle in $u_m$.  The \emph{length}
		of a partial face is its length as a cycle.
	\item \emph{Bad dart}:  An unpaired dart contained in a cycle of length one in $u_m$.   
		\item \emph{Mixed partial face/bad partial map}:  A partial face in 
			$m$ is \emph{mixed} if it contains darts in both $S^u$
			and $T^u$.  The partial map $m$ is called \emph{bad} if it does not
			contain any mixed partial faces.
	\end{itemize}

%\begin{figure}
%    \centering
%   \includegraphics[scale=0.8]{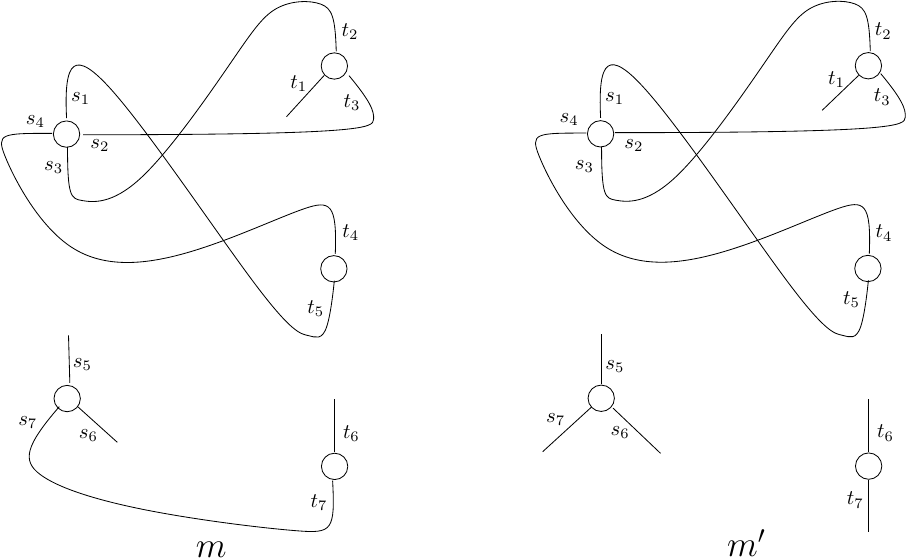} \label{fig:firstuexample}
    %\caption{In the partial map $m$ on the left, the paired darts are $\{ s_1, s_2, s_3, s_5, t_3, t_5, t_6, t_7 \}$.  The unpaired darts are all of the remaining darts.  In particular, $S^u = \{s_4, s_6, s_7, s_8 \}$ and $T^u = \{t_1, t_2, t_4, t_8 \}$.  It has one completed face.  The unpaired permutation is $u_m = (s_4)(s_6 \, s_7 \, s_8 \, t_8)(t_1 \, t_2)(t_4)$.  It has two bad darts, $s_4$ and $t_4$.  The cycle $(s_6 \, s_7 \, s_8 \, t_8)$ contained in $u_m$ has symbols in both $S^u$ and $T^u$, so the partial map is not bad.
    %The partial map $m'$ on the right has $u_{m'} = (s_4)(s_5\,s_6 \, s_7 \, s_8 )(t_1 \, t_2)(t_4)(t_7 \, t_8)$.  All of its cycles contain only symbols in $S^u$, or only symbols in $T^u$.  Therefore $m'$ is a bad map.}
%\end{figure}
\begin{figure}
    \centering
   \includegraphics[scale=0.8]{} \label{fig:firstuexample}
\caption{ In the partial map $m$ on the left, the paired darts are $\{ s_1, s_2,
	s_3, s_4, s_7, t_2, t_3, t_4, t_5, t_7 \}$.  The unpaired darts are all
	of the remaining darts,  so $S^u = \{s_5, s_6 \}$ and $T^u =
	\{t_1, t_6 \}$.  It has two completed faces, given by the cycles
	$(s_2\, t_2)$ and $(s_4\, t_5)$.  The unpaired permutation is
	$u_m = (t_1)(s_5 \, s_6\, t_6)$.  It has one
	bad dart, $t_1$, and it has one mixed partial face, so it is not a bad map.  The partial map $m'$ on the right has $S^u = \{s_5, s_6, s_7\}$ and
	$T^u = \{t_1, t_6, t_7\}$, and it has the same bad dart, $t_1$, as $m$.  Furthermore, we observe $u_{m'} = (t_1)
	(s_5\,s_6 \, s_7 \, s_8)(t_6 \, t_7)$.  There are no mixed partial faces, so
	$m'$ is a bad map.
}
\end{figure}

We shall use a different random process to prove the upper and lower bounds for $\E[F_{\alpha, \beta}]$.
\\

\noindent \textbf{Random Process A (RPA)}

\begin{enumerate}
    \item Initialize $S^u=S$, $T^u = T$, $X=\emptyset$, $\pi_0: X \rightarrow [n]$.
    \item Call $m_{\pi_{k-1}}$ the partial map at the start of the $k^\upth$
	    step.  Pick the \emph{active dart} $d$ from $S^u \cup T^u$ with respect to the following order of preference.
    \begin{itemize}
        \item The bad dart in $S^u$ with smallest index.
        \item The bad dart in $T^u$ with smallest index.
        \item The dart with the smallest index in $S^u$.
    \end{itemize}
    \item If $d = s_i \in S^u$, then pick $t_j \in T^u$ uniformly at random and
		call $t_j$ the \emph{pairing dart} at this step.  If $d = t_j \in T^u$,
		then pick $s_i \in S^u$ uniformly at random and call $s_i$ the
		\emph{pairing dart}.  Set $X = X \cup \{i\}$, and define $\pi_k: X
		\rightarrow [n]$ as $\pi_{k-1}$ from the previous step, but also with $\pi_k(i) = j$.
    \item Then $S^u = S^u - \{s_i\}$ and $T^u = T^u - \{t_j\}$.  If $S^u \neq \emptyset$ then return to step $2$.
    \item Output the map $m_{\pi_n}$.
\end{enumerate}

\noindent \textbf{Random Process B (RPB)}

\begin{enumerate}
    \item Initialize $S^u=S$ and $T^u = T$, $X=\emptyset$, $\pi_0: X \rightarrow [n]$.
    \item Call $m_{\pi_{k-1}}$ the partial map at the start of the $k^\upth$ step.  Pick the \emph{active dart} as $s_k$.
    \item Pick $t_j \in T^u$ uniformly at random and call $t_j$ the \emph{pairing dart} at this step.  Set $X = X \cup \{i\}$, and define $\pi_k: X \rightarrow [n]$ as $\pi_{k-1}$ from the previous step also with $\pi_k(i) = j$.
    \item Then $S^u = S^u - \{s_k\}$ and $T^u = T^u - \{t_j\}$.  If $S^u \neq \emptyset$ then return to step $2$.
	\item Output the map $m_{\pi_n}$.
\end{enumerate}
After $n$ steps of either process, the output is a map in $M_{\alpha,\beta}$.

\begin{lemma}
    RPA and RPB both output a uniform at random map from $M_{\alpha,\beta}$.
\end{lemma}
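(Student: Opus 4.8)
The plan is to reduce the lemma to the elementary fact that a uniformly random perfect matching between two $n$-element sets can be sampled edge by edge: at each step one chooses, by any rule that depends only on the partial matching built so far, an unmatched element of one side, and then matches it to a uniformly random unmatched element of the other side. Since the rotation scheme $R$ is fixed once $\alpha$ and $\beta$ are fixed, a map $m_\pi \in M_{\alpha,\beta}$ is determined by $E(\pi)$, that is, by the perfect matching $\{\, s_i\, t_{\pi(i)} : i \in [n] \,\}$ between $S$ and $T$; distinct $\pi$ give distinct matchings, so $|M_{\alpha,\beta}| = n!$ and the uniform distribution $U$ corresponds exactly to the uniform distribution on permutations $\pi \in \Sym_n$. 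It therefore suffices to show that each process outputs a uniformly random such matching.

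First I would check that both processes are well-defined and do build a perfect matching. Each step deletes exactly one dart from $S^u$ and one from $T^u$, so $|S^u| = |T^u|$ throughout; hence whenever $S^u \neq \emptyset$ the opposite set is also nonempty and a pairing dart exists. The active-dart rule always returns an unpaired dart: in RPB it is $s_k$, which has not yet been removed, while in RPA the last clause (``the dart with smallest index in $S^u$'') is a fallback valid whenever $S^u \neq \emptyset$. The crucial point, which I would state explicitly, is that in both processes the active dart at step $k$ is a deterministic function of the partial map $m_{\pi_{k-1}}$, hence of the set of pairs chosen in steps $1, \dots, k-1$ alone; it uses none of the randomness still to come.

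Next I would compute, for an arbitrary fixed $\pi \in \Sym_n$ with associated matching $M$, the probability that a process outputs $m_\pi$. Conditioning on the event that the pairs chosen in the first $k-1$ steps all lie in $M$, the partial map $m_{\pi_{k-1}}$ is completely determined, so the active dart $d_k$ is determined; since $d_k$ is unpaired and $M$ is a matching, its $M$-partner $M(d_k)$ is one of the exactly $n-k+1$ unpaired darts on the side opposite $d_k$. The trajectory continues toward $M$ at step $k$ precisely when the pairing dart chosen equals $M(d_k)$, an event of probability $\tfrac{1}{n-k+1}$. Multiplying over $k = 1, \dots, n$ gives
$$\Pr[\text{output} = m_\pi] = \prod_{k=1}^{n} \frac{1}{n-k+1} = \frac{1}{n!},$$
independent of $\pi$, which is the claimed uniformity. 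The computation is identical for RPA and RPB; the only difference, the particular deterministic active-dart rule, does not affect the count.

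The one delicate point is the ``deterministic function of the history'' claim for RPA, since there the active dart may lie in either $S^u$ or $T^u$ and the preference order invokes bad darts, which are defined through the unpaired permutation $u_m$. I expect this to be the main obstacle to a fully rigorous write-up, although it amounts only to observing that $u_{m_{\pi_{k-1}}}$, and hence the entire preference order at step $k$, is determined by $R$ (fixed) together with the pairs added in the first $k-1$ steps. Once this is recorded, the telescoping product above settles both cases.
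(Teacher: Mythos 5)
Your proof is correct and is essentially the paper's argument: both hinge on the fact that the active dart at each step is determined by the partial map built so far, so that the $n-k+1$ uniform choices at step $k$ give each of the $n!$ maps probability $\prod_{k=1}^{n}\frac{1}{n-k+1}=\frac{1}{n!}$. The paper phrases this as a counting/bijection statement (there are $n!$ equally likely outcomes and every map is attained), while you compute $\Pr[\text{output}=m_\pi]$ directly and make the determinism of the active-dart rule explicit, which is a minor presentational difference only.
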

\begin{proof}
    At the $k^\upth$ iteration of step $2$, there are $n-k+1$ choices of pairing
    dart.  There are therefore $n!$ possible outcomes of this random process,
    each with equal probability.  There are $n!$ elements in $M_{\alpha,\beta}$,
    and every map is clearly output by each of the random processes.  The result follows.
\end{proof}

We give some basic observations, which are illustrated in  Figures
\ref{fig:mapobservationsexample} and \ref{fig:mapobservationsexample2}. 

\begin{observation}\label{thm:obs}
Suppose that we are at some step 2 of RPA or RPB.  Let $m$ be the
partial map at the beginning of this step, and suppose that it has active dart
$d$.  Step 3 then pairs $d$ with another dart.  Let $u=u_m$.  Observe the following.
    \begin{enumerate}
	    \item[1.] Suppose that $d$ is a bad dart.  A completed face is added at this step
			if and only if the pairing dart is also bad (\emph{i.e.}  in the
		unpaired permutation $u$, both $d$ and its pairing dart are
		fixed points).  When a completed face is added the resulting map has two
		fewer bad darts.  A new bad dart
		is added if and only if the pairing dart is in a partial face of length $2$.
		\item[2.] Suppose that $d$ is not a bad dart.   A completed face is added if and
		only if the pairing dart is $u(d)$ or $u^{-1}(d)$.  If the pairing dart is $u(d)$ and $u(d) = u^{-1}(d)$ then two completed faces are added at this
		step.  Similarly, a bad dart is added if and only if the pairing
		dart is $u^2(d)$ or $u^{-2}(d)$.  If $u^2(d) = u^{-2}(d)$, then
		two bad darts are added.
    \end{enumerate}
\end{observation}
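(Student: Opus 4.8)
The plan is to reduce everything to the effect of a single transposition on the ``total'' face permutation. Write $\tau := R \cdot E(\pi_{k-1})$ for the permutation of $D$ whose cycles are all the faces (completed and partial) of the partial map $m$ at the start of the step. By definition the $u$-cycle through any dart is exactly the $\tau$-cycle through it with its paired darts removed, so the $\tau$-cycles containing an unpaired dart correspond bijectively to the partial faces, while the remaining $\tau$-cycles are the completed faces; in particular $F(m) = c(\tau) - c(u)$. Since $d$ and $p$ are fixed points of $E(\pi_{k-1})$, the new matching $E(\pi_k)$ is $E(\pi_{k-1})$ composed with $(d\,p)$, and unwinding the composition convention used above gives
\[
 \bigl(R \cdot E(\pi_k)\bigr)(x) \;=\; (d\,p)\bigl(\tau(x)\bigr) \qquad \text{for all } x \in D .
\]
The permutation $x \mapsto (d\,p)(\tau(x))$ is obtained from $\tau$ by the familiar cycle surgery: if $d$ and $p$ lie in a common cycle of $\tau$ it splits that cycle into two, and otherwise it merges the cycle of $d$ and the cycle of $p$ into one. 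Moreover $d, p$ lie in a common $\tau$-cycle if and only if they lie in a common $u$-cycle, i.e. a common partial face. It then suffices to track the new cycle(s) through $d$ and through $p$, using that a face is completed precisely when its cycle has no unpaired dart, and that a bad dart is an unpaired dart forming a singleton cycle of the unpaired permutation.

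First I would prove item 1, where $d$ is a bad dart, so the $u$-cycle of $d$ is the singleton $(d)$ and $d, p$ necessarily lie in distinct $\tau$-cycles; the step merges the $\tau$-cycle of $d$ (whose only unpaired dart is $d$) with that of $p$. The unpaired darts of the merged cycle are exactly those of $p$'s old $\tau$-cycle other than $p$, that is, the partial face of $p$ with $p$ deleted. If $p$ is also bad this set is empty, so the merged cycle is a completed face, the two bad darts $d,p$ disappear, and nothing else changes, giving ``two fewer bad darts''. If $p$ is not bad its partial face has length $\ell \geq 2$, and deleting $p$ leaves a partial face of length $\ell-1$, which is a new bad dart exactly when $\ell = 2$ and is not one otherwise, with no completed face created. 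This is item 1.

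For item 2, $d$ is not bad, so its partial face is $(b_0,b_1,\dots,b_{\ell-1})$ with $b_0 = d$, $b_j = u^j(d)$, and $\ell \geq 2$. If $p = b_k$ lies on this same face (so $1 \leq k \leq \ell-1$), the step splits the $\tau$-cycle through $d$ into two cycles with sets of unpaired darts $\{b_1,\dots,b_{k-1}\}$ and $\{b_{k+1},\dots,b_{\ell-1}\}$; one of these is empty -- a new completed face -- exactly when $k = 1$ (i.e. $p = u(d)$) or $k = \ell-1$ (i.e. $p = u^{-1}(d)$), and both are empty, yielding two completed faces, exactly when $\ell = 2$, i.e. $p = u(d) = u^{-1}(d)$. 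Likewise a singleton partial face -- a new bad dart -- is created exactly when $k = 2$ ($p = u^2(d)$) or $k = \ell-2$ ($p = u^{-2}(d)$), and two of them exactly when $u^2(d) = u^{-2}(d)$. If instead $p$ lies on a different face the step merges, and the merged cycle still contains $b_1,\dots,b_{\ell-1}$ (a nonempty set since $\ell \geq 2$), so no completed face is created, as claimed. This gives item 2.

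The one point needing genuine care is the bad-dart count in item 2 when $d$ sits in a short cycle of $u$: for $\ell = 2$ the conditions ``$p = u^{\pm 2}(d)$'' degenerate to ``$p = d$'', and the merging sub-case must be matched against this reading carefully. This is the main obstacle; in the application to RPA it does not arise, since whenever the active dart $d$ is not bad there are no bad darts present at all (otherwise the priority rule of RPA would have selected a bad dart as the active dart). Everything else is the mechanical translation of ``apply a transposition after $\tau$'' into the language of faces and bad darts.
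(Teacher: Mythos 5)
Your reduction to cycle surgery on $\tau = R\cdot E(\pi_{k-1})$ is the natural formalization of what the paper leaves unproved (the Observation is only illustrated by figures): pairing $d$ with $p$ multiplies $\tau$ by the transposition $(d\,p)$, which splits the common cycle or merges two cycles, and tracking which darts of the affected cycles remain unpaired gives item 1 and the completed-face statements of item 2 correctly. (A minor point: under the paper's convention the new face permutation is $\tau\cdot(d\,p)$ rather than $(d\,p)\cdot\tau$, but the two are conjugate and all the stated criteria are symmetric under $u\leftrightarrow u^{-1}$, so this is harmless.)

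The genuine gap is the one you flag but do not close: the ``only if'' of the bad-dart claim in item 2 in the merging sub-case. If $d$ is not bad but its partial face is $(d\ u(d))$ of length $2$, and the pairing dart $p$ is a bad dart (hence on a different partial face), then the merged face contains exactly one unpaired dart, namely $u(d)$, which becomes bad --- even though $p$ is neither $u^2(d)$ nor $u^{-2}(d)$ (both equal $d$). You correctly note this cannot occur in RPA, because there the active dart is non-bad only when no bad darts exist at all. But the Observation is asserted for RPA \emph{and} RPB, and in RPB the active dart is always $s_k$ regardless of badness, so the configuration does occur: by Observation \ref{thm:obs2} the partial face of $s_k$ can be $(s_k\, t_{i_1})$ (when $s_k$ is the last unpaired dart at its vertex) or $(s_k\, s_{k+1})$, while bad darts may be present in $T^u$; pairing $s_k$ with such a bad dart then creates a bad dart not predicted by the stated criterion. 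So your argument, as written, does not establish the statement for RPB (indeed the literal ``only if'' fails there). To finish, you must either record this exceptional case explicitly, or verify that it is innocuous where the observation is invoked for RPB --- e.g.\ in Lemma \ref{lem:baddartsestimate} only bad darts in $T^u$ are counted, and in the exceptional case the pairing bad dart is simultaneously removed, so the paper's estimates are not damaged; but that check is part of a complete proof and is missing from your write-up.
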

\begin{figure}
    \centering
    \includegraphics[scale=0.8]{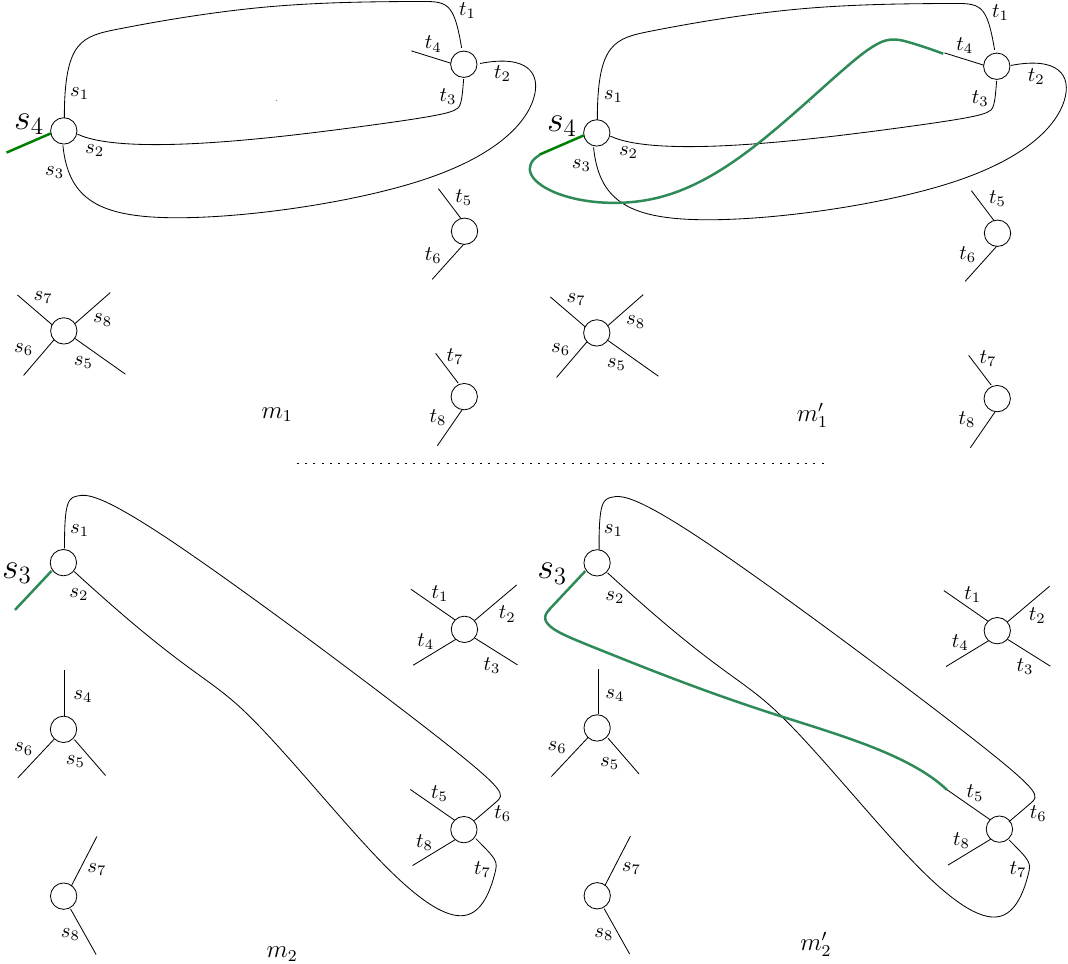}
    \caption[Applying RPAB \ref{ran:generalconjugacyupper} to a partial map.]{
	    Illustrating Observation \ref{thm:obs}.1, we have four maps:  the left
		contains two partial maps $m_1$ and $m_2$ and the right contains possible outputs $m_1'$ and $m_2'$, respectively, of an application of one of the random processes.  On
		the left the active dart is in green.  The map $m_1$ has two bad darts $s_4$
		(the active dart) and $t_4$.  The map $m_1'$ was obtained by pairing $s_4$
		with $t_4$, completing a face (given by the cycle $(s_1\, t_3\, s_4\,
		t_1\, s_3\, t_4)$), as claimed in Observation 5.1.\newline
		The map $m_2$ has $s_3$ as the active dart, and has a face of length 2.
		If $s_3$ is paired with $t_5$, a bad dart, $t_8$, is created in $m_2'$.
}
    \label{fig:mapobservationsexample}
\end{figure}
\begin{figure}
    \centering
    \includegraphics[scale=0.8]{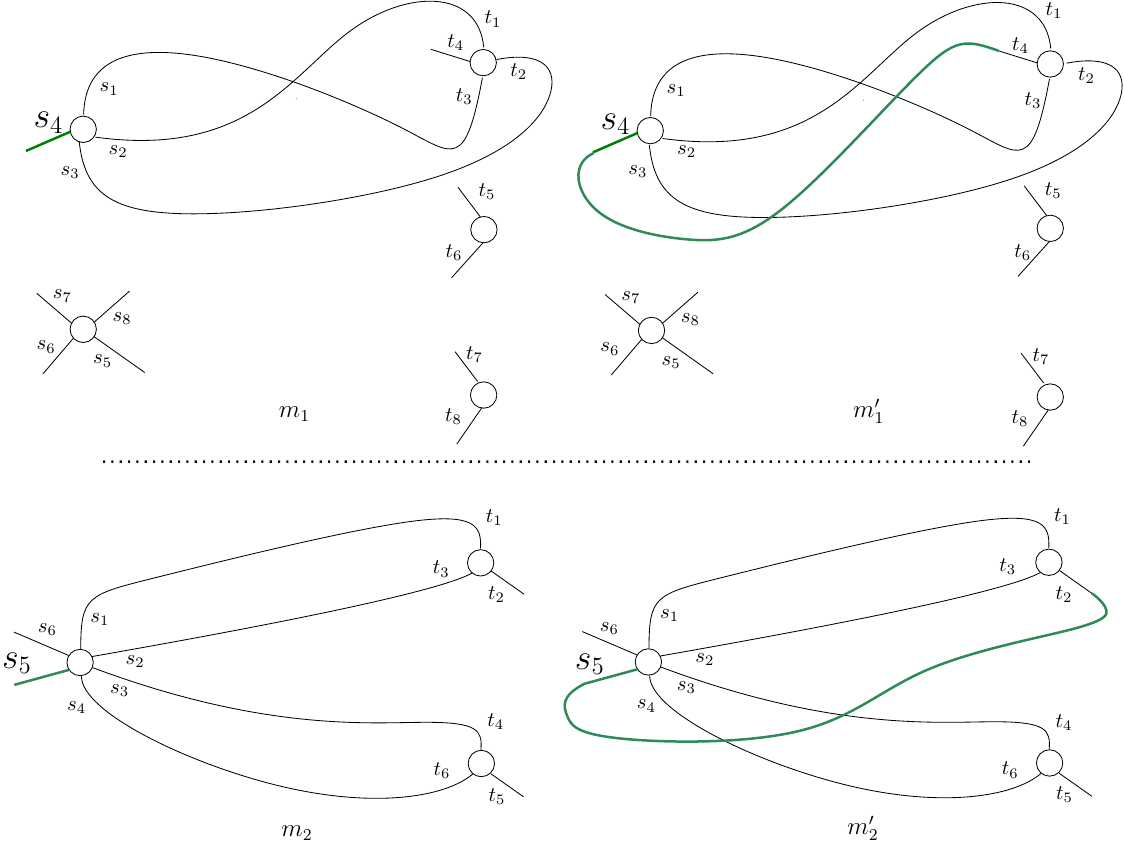}
    \caption[lof entry]{Illustrating Observation \ref{thm:obs}.2, we have four
		maps whose relationship with each other is the same as in Figure
		\ref{fig:mapobservationsexample}.  The green dart is again the active
		dart.  We find $u_{m_1} = (s_5\, s_6\, s_7\, s_8)(t_5\, t_6)(t_7\,
		t_8)(s_4\, t_4)$.  The
    partial map $m_1'$ is obtained by pairing $s_4$ with $t_4$;  by Observation
    \ref{thm:obs}.2 this creates two completed cycles, one is given by $(s_1\,
    t_1\, s_3\, t_4\, s_2\, t_2)$ and the other by $(t_3\, s_4)$.\newline
    The map $m_2$ has $u_{m_2} = (s_5\, s_6\, t_2\, t_5)$.  The partial map
    $m_2'$ is obtained from $m_2$ by pairing the active dart $s_5$ with $t_2$.
    Since $u^2_{m_2}(s_5) = u^{-2}_{m_2}(s_5) = t_2$, this creates two bad darts in
$m_2'$, the darts $s_6$ and $t_5$.}
    \label{fig:mapobservationsexample2}
\end{figure}

\subsection{Proving the upper bound with RPA}\label{sec:rpa}

Again, throughout this section $\alpha$ and $\beta$ are partitions of some
positive integer $n$ with no parts of size 1.
Suppose some partial map $m$ appears at the start of some step of RPA.  Then the random process will pair the active dart with a randomly chosen
pairing dart.  This will add some number (possibly zero) of completed faces to
the partial map.  Let $F_m$ be the random variable for the number of completed
faces that are added when the random process adds a new edge to $m$.

\begin{lemma} \label{lem:akdecomp}
    Suppose the random process outputs the map $m_\pi$.   Let $m_\pi^{(k)}$
	denote the partial map at the $k^\upth$ step of the process
that outputted $m_\pi$.  
Then we have
    $$
        \E[F_{\alpha,\beta}] \leq \max_{m_\pi \in
		M_{\alpha,\beta}}\left\{\sum_{k=1}^n \E[F_{m_\pi^{(k-1)}}]\right\}.
    $$
\end{lemma}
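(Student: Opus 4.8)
The plan is to use the tower property of expectation together with a step-by-step decomposition of the number of faces. First I would observe that if $m_\pi$ is the output map, then $F(m_\pi)$ is exactly the total number of completed faces created over the $n$ steps of RPA, since at the end every dart is paired and every cycle of $R \cdot E(\pi)$ is a completed face. Writing $X_k$ for the number of completed faces created at the $k^\upth$ step, this means $F_{\alpha,\beta} = \sum_{k=1}^n X_k$, where $X_k$ is determined by the partial map $m_\pi^{(k-1)}$ at the start of step $k$ together with the random pairing dart chosen at that step. Note that conditioned on the partial map $m_\pi^{(k-1)}$, the random variable $X_k$ has exactly the distribution of $F_{m_\pi^{(k-1)}}$ as defined just above the lemma, so $\E[X_k \mid m_\pi^{(k-1)}] = \E[F_{m_\pi^{(k-1)}}]$.

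Next I would take expectations and use linearity followed by the tower property:
\begin{align*}
\E[F_{\alpha,\beta}] &= \sum_{k=1}^n \E[X_k] = \sum_{k=1}^n \E\bigl[\,\E[X_k \mid m_\pi^{(k-1)}]\,\bigr] = \E\left[\sum_{k=1}^n \E[F_{m_\pi^{(k-1)}}]\right].
\end{align*}
Since the expectation of any random variable is at most its maximum value, and the quantity $\sum_{k=1}^n \E[F_{m_\pi^{(k-1)}}]$ is a function of the (random) output map $m_\pi$ — equivalently of the entire trajectory of the process, but each trajectory corresponds to a unique output map — this is bounded above by $\max_{m_\pi \in M_{\alpha,\beta}} \sum_{k=1}^n \E[F_{m_\pi^{(k-1)}}]$, which gives the claimed inequality.

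The one point requiring a little care — and the main obstacle to a fully rigorous write-up — is the indexing of the maximum. The sequence of partial maps $m_\pi^{(0)}, m_\pi^{(1)}, \dots, m_\pi^{(n)} = m_\pi$ is determined by the run of the process, and in RPA the active dart at each step is a deterministic function of the current partial map, so the only randomness is in the choice of pairing darts; hence the trajectory, and in particular each intermediate partial map $m_\pi^{(k-1)}$, is recoverable from the final map $m_\pi$. This is what legitimizes writing the sum as a function of $m_\pi$ and taking the maximum over $M_{\alpha,\beta}$ rather than over trajectories. I would state this reconstruction explicitly (or at least remark on it), since it is the step that makes the notation $m_\pi^{(k)}$ well-defined and the maximization meaningful. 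Everything else is a routine application of linearity of expectation and the tower property.
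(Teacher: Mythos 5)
Your proposal is correct and follows essentially the same route as the paper: decompose the face count as the sum over steps of faces added, apply linearity and conditioning on the partial map (the paper phrases your tower-property step as total probability, $\E[A_k]=\sum_m Pr[m_k=m]\E[F_m]$), and then bound the resulting average over output maps by the maximum. Your explicit remark that the RPA trajectory, and hence each $m_\pi^{(k-1)}$, is recoverable from the output map $m_\pi$ is exactly the point the paper leaves implicit when it rewrites the double sum as $\frac{1}{n!}\sum_{m_\pi\in M_{\alpha,\beta}}\sum_{k=1}^n \E[F_{m_\pi^{(k-1)}}]$, so including it is a welcome clarification rather than a deviation.
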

\begin{proof}
    Each face in $m_\pi$ is added as a completed face at some step of RPA.  Once it is added, it cannot be removed at a later step.  At the
	$k^\upth$ iteration of step 3, let $m_k$ be the random variable for the
	partial map at the start of this step, and let $A_k$ be the random variable
for the number of completed faces added at the $k^\upth$ step.
    
    Using total probability, we have
    \begin{equation*}
        \E[A_k] = \sum_m Pr[m_k = m] \E[F_m],
    \end{equation*}
    where the sum is over all possible partial maps.
    Using linearity of expectation gives:
    \begin{align*}
        \E[F_{\alpha,\beta}] &= \sum_{k=1}^n \E[A_k] \\
        &= \sum_{k=1}^n \sum_m Pr[m_k = m] \E[F_m] 
	\end{align*}
	\begin{align*}
        &= \frac{1}{n!} \sum_{m_\pi \in M_{\alpha,\beta}} \sum_{k=1}^n \E[F_{m_\pi^{(k-1)}}] \\
        &\leq \max_{m_\pi \in M_{\alpha,\beta}}\left\{\sum_{k=1}^n \E[F_{m_\pi^{(k-1)}}] \right\} \qedhere
    \end{align*}
\end{proof}

\begin{lemma} \label{lem:baddartsactivefaces}
    Any partial map $m$ appearing at the start of some step of RPA has at most two bad darts, and at most one mixed partial face.  This mixed partial face is always of the form $(s_{i_1} \, \dots \, s_{i_a} \, t_{j_1} \, \dots \, t_{j_b})$, and if the active dart $d$ is in this partial face it must equal $s_{i_1}$.
\end{lemma}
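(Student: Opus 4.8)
The plan is to prove the statement by induction on the step number of RPA, carrying the three assertions --- at most two bad darts, at most one mixed partial face, and the precise ``block'' form $(s_{i_1}\,\dots\,s_{i_a}\,t_{j_1}\,\dots\,t_{j_b})$ of that face together with the location of the active dart --- as a single invariant on the partial map present at the start of each step. It is likely convenient to strengthen the invariant slightly, adding the clause that whenever a mixed partial face is present the active dart is either a bad dart or the head $s_{i_1}$ of that face; the lemma as stated is then an immediate consequence. For the base case, at the start of step $1$ we have $E=\mathrm{id}$, so $u_m=R\cdot E=R$, whose cycles are exactly the cycles of $R$; these have lengths among the parts of $\alpha$ and $\beta$, hence all at least $2$ since neither partition has a part equal to $1$, and each lies entirely in $S$ or entirely in $T$. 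So $m$ has no bad darts and no mixed partial face, and the invariant holds.

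For the inductive step, suppose the invariant holds for the partial map $m$ at the start of step $k$, with unpaired permutation $u=u_m$ and active dart $d$ selected by RPA's priority rule. Step $3$ adds an edge $(s_i\,t_j)$, where one of $s_i,t_j$ equals $d$ and the other is the pairing dart $e$, which is uniform in $T^u$ if $d\in S^u$ and uniform in $S^u$ if $d\in T^u$. Passing to the new partial map $m'$ right-multiplies $R\cdot E$ by the transposition $(s_i\,t_j)$; since $s_i$ and $t_j$ are fixed points of $E$, this either merges the two cycles of $R\cdot E$ through $d$ and $e$ (if they lie in different cycles) or splits their common cycle (if they lie in the same one). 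I would track bad darts and mixed faces through this using Observation~\ref{thm:obs}, organised by the cases of the priority rule: (i) $m$ has a bad dart, so $d$ is a bad dart and a fixed point of $u$; and (ii) $m$ has no bad dart, so $d=s_i$ is the least-indexed dart of $S^u$ and is not bad, which by the strengthened invariant forces $d=s_{i_1}$ whenever a mixed face is present. In case (i), Observation~\ref{thm:obs}.1 shows a completed face appears only when $e$ is bad, consuming both bad darts and creating none; otherwise $d$ is consumed and at most one new bad dart (from a partial face of length $2$ at $e$) is created, so the count never exceeds two; moreover $e$ has the colour opposite to $d$, so removing $d$ and $e$ from $u$ only shortens or deletes a monochromatic face or the $t$-block (resp.\ $s$-block) of the mixed face, whence no second mixed face appears and block form is preserved. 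In case (ii), Observation~\ref{thm:obs}.2 gives the analogous bound (a second bad dart arises only when $e=u^2(d)=u^{-2}(d)$, i.e.\ $d$ lies in a partial face of length $4$), and the mixed-face analysis reduces to: merging two monochromatic faces, when no mixed face yet exists, produces a single face that is one $S$-arc followed by one $T$-arc, i.e.\ block form; merging the mixed face with a monochromatic $T$-face by pairing its head $s_{i_1}$ leaves the surviving $S$-darts $s_{i_2},\dots,s_{i_a}$ contiguous and followed by all the $T$-darts, again block form (or a purely $T$-face when $a=1$); and pairing $s_{i_1}$ with a dart inside the mixed face splits it, again leaving at most one mixed face in block form, plus possibly a completed face or a bad dart. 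One finally records that the new head after such a step is $R(s_{i_1})$, which is again the least-indexed unpaired $S$-dart, closing the strengthened clause.

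The step I expect to be the main obstacle is this mixed-face bookkeeping in case (ii): verifying, uniformly over all possible positions of the random pairing dart $e$, that the cyclic word of the affected partial face is reorganised into (again) a single run of $S$-darts followed by a single run of $T$-darts, and in particular establishing the strengthened clause that a mixed face never coexists with a smaller-indexed monochromatic $S$-dart --- this is precisely what rules out both a second ``in-progress'' mixed face and a merge that would interleave $S$- and $T$-darts. The cleanest route is to argue locally, reading the effect of adding $(s_i\,t_j)$ off the face-tracing walk in the map, and to use Observation~\ref{thm:obs} to restrict $e$ to the few relevant positions ($u^{\pm1}(d)$, $u^{\pm2}(d)$, inside versus outside the current face) before treating each in turn; the degenerate cases where the $S$-block or $T$-block of the mixed face has length one should be checked separately, since there some of these positions coincide.
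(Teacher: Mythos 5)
Your induction set-up, the bad-dart count, and the case (i) observation that pairing a bad dart just deletes it and its opposite-colour partner from $u_m$ (hence preserves block form) all match the paper and are fine. The genuine gap is at exactly the point you flag as the main obstacle: closing the strengthened clause that a non-bad active dart must be the head of the mixed face. Your only argument for this is the sentence ``the new head after such a step is $R(s_{i_1})$, which is again the least-indexed unpaired $S$-dart.'' That is unproven and not even correct as stated: the head of the face produced or left behind at a case (ii) step is $u_m(d)$, the next \emph{unpaired} dart on the face walk, and this equals $R(d)$ only if $R(d)$ is still unpaired. Since RPA pairs active bad darts of $T^u$ with uniformly random darts of $S^u$, higher-indexed $S$-darts get paired out of order, so $R(d)$ may already be used and $u_m(d)$ may even lie at a later vertex; nothing in the invariant you carry shows it is the least-indexed surviving unpaired $S$-dart. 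Worse, the invariant cannot be closed after a case (i) step at all: when a mixed face coexists with bad darts and the bad darts are then consumed, the next active dart is the least-indexed dart of $S^u$, and ``at most two bad darts, one block-form mixed face, previous active dart was bad'' contains no information forcing that dart to lie in, let alone head, the surviving mixed face. This is not a formality: if the least-indexed unpaired $S$-dart could sit in an all-$S$ face while a mixed face $(s_{i_1}\cdots s_{i_a}\,t_{j_1}\cdots t_{j_b})$ exists, pairing it with a $t_{j_c}$ in the interior of the $T$-block merges the cycles into $(g_1\cdots g_p\; t_{j_{c+1}}\cdots t_{j_b}\; s_{i_1}\cdots s_{i_a}\; t_{j_1}\cdots t_{j_{c-1}})$, which interleaves $S$- and $T$-darts and destroys block form --- precisely the configuration you acknowledge must be excluded.

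The paper excludes it by a non-local argument about the history of the process, which your one-step bookkeeping does not replace: by the priority rule, any $S$-dart of index larger than the current non-bad active dart $d$ that is already paired must have been paired as the partner of an active bad dart of $T$, and such steps never create mixed faces; consequently every higher-indexed unpaired $S$-dart away from $d$'s vertex lies in an all-$S$ partial face, the $S$-darts of the mixed face all sit at $d$'s vertex and appear contiguously in increasing index, and hence $u_m^{-1}(d)\in T^u$, i.e.\ $d$ is the head $s_{i_1}$. To repair your proof you would either have to reproduce this history-based argument, or carry a substantially stronger invariant (for instance: the $S$-darts of the mixed face are exactly the lowest-indexed unpaired $S$-darts, in increasing order, all at the vertex currently being processed) and verify it through the bad-dart steps as well --- which is essentially the paper's proof of Lemma \ref{lem:baddartsactivefaces} in disguise.
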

\begin{proof}
    Suppose the partial map $m$ has no bad darts, and suppose $d$ is the active
	dart at this step.  If the pairing dart is $u^2(d)$ then $u(d)$ will become
	a bad dart at the next step.  If the pairing dart is $u^{-2}(d)$ then
	$u^{-1}(d)$ will become a bad dart at the next step.  If $u^2(d) =
	u^{-2}(d)$ then these choices coincide and we add two bad darts.  The above
	are merely Observation \ref{thm:obs} restated.  In either case, we add at most two bad darts.

        Now suppose $m$ has one or two bad darts.  Then $d$ is one of these bad
		darts, since step 2 of RPA prioritises choosing bad darts for the active
		dart.   See Figure \ref{fig:mapobservationsexample}
		for an example of RPA processing a bad dart.  We make a new bad dart if
		and only if the pairing dart $d'$ is in partial face of length 2.  In
		the new map $d$ will no longer be a bad dart.  Since we removed a bad dart, and added at most one bad dart, at the next step we will have at most two bad darts.

        This covers both cases, so the number of bad darts is always at most
		two.  We now prove the claims about the mixed partial faces.

        The partial map at the start of an application of RPA has no
		paired darts, so it has no mixed partial faces.  We therefore proceed by
		induction.  Let $m$ be the partial map at the beginning of some
		iteration of RPA.  Suppose the partial map $m$
		at the start of a step of the process has at most one mixed partial
		face.  If the map is bad then it has no mixed partial faces, and we can
		add at most one mixed partial face at this step.   

		Otherwise assume that the map has one mixed partial face.  We have two cases:
		the active dart $d$ is bad or not.  If $d$ is bad, then processing it
		cannot add any more mixed partial faces.  If $d$ is not bad, then it is
		in $S^u$ by our choice of active dart in step 2.  Let $v$ be the vertex
		incident with $d$.  We first aim to show that $u_m^{-1}(d) \in T^u$;
		for that purpose, we assume the opposite: that $u_m^{-1}(d) \in S^u$.  First, all the
		darts in $S^u$ with smaller index than $d$ are processed.  Second,
		observe that all the darts in $S^u$ not incident with $v$ with larger index must be in
		partial faces only containing darts in $S^u$.  This is because any
		paired dart with larger index than $d$ in $S^u$ (at $v$ or a later
		vertex) must have been paired with an active bad dart in $T$ (by
		the choice of active dart at step 2), and
		pairing an active bad dart cannot create a new mixed partial
		face.  This has two consequences.  First, all the darts
		in $S^u$ that are contained in the mixed partial face must be
		incident with $v$.  Moreover, they appear contiguously in the
		mixed partial face from the dart with lowest index to the dart
		with highest index.  Since $d$ is the dart with lowest index
		among the unpaired darts at $v$, if $u_m^{-1}(d) \in S^u$, then
		$u_m^{-1}(d)$ has the highest index amongst unpaired darts at
		$v$.  Therefore the mixed partial face equals $(d \cdots
		u_m^{-1}(d))$, so only contains darts incident with $v$.  Thus
		this partial face is not mixed, giving a contradiction. 
        
        Therefore $u_m^{-1}(d) \in T^u$, so the mixed partial face is of the
	form\newline $(d \, s_{i_1} \, \cdots \, s_{i_a} \, t_{j_1} \, \cdots \, t_{j_b})$
	with $a \geq 0$ and $b \geq 1$.  Pairing $d$ with any dart in this mixed partial face, or outside of this mixed partial face, still preserves there being at most one mixed partial face of this form.
\end{proof}

We now estimate when completed faces are added to a partial map.

\begin{proof} [Proof of the upper bound in Theorem \ref{thm:mainbound}]
    Fix some map $m_\pi \in M_{\alpha,\beta}$.  Using Lemma \ref{lem:akdecomp}
    it will be sufficient to estimate $\sum_{k=1}^n \E[F_{m_\pi^{(k-1)}}]$.  Fix
	some value of $k$ and let $d$ be the active dart chosen in the $k^\upth$ iteration of step 2.
    
	Case 1: The dart $d$ is bad.  By Observation \ref{thm:obs}, a face will be added if and only if the
    pairing dart is also bad.  By Lemma \ref{lem:baddartsactivefaces}, there is
    at most one other bad dart, so there is at most one other choice that adds
    a face.  This gives $\E[F_{m_\pi^{(k-1)}}] \leq \frac{1}{n-k+1}$.
    
    Case 2: The dart $d$ is not bad and $u(d) \in S^u$.   Then the pairing dart cannot be
    chosen to be $u(d)$, so $u^{-1}(d)$ is the only possible choice of pairing
    dart that adds a completed face.  In this case $\E[F_{m_\pi^{(k-1)}}] \leq \frac{1}{n-k+1}$.
    
    Case 3: The dart $d$ is not a bad dart and $u(d) \in T^u$.  Then choosing
	the pairing dart to be $u(d)$ or $u^{-1}(d)$ adds a completed face, and both
	of these choices are possible.  If $u(d) = u^{-1}(d)$ then choosing the
	pairing dart to be $u(d)$ adds $2$ completed faces.  We therefore have
	$\E[F_{m_\pi^{(k-1)}}] \leq \frac{2}{n-k+1}$.  Since $u(d) \in T^u$, we see
	that $d$ is in a mixed partial face at the start of this step.  By Lemma \ref{lem:baddartsactivefaces} we have that $u^{-1}(d) \in T^u$, so the mixed partial face is of the form $(d \, t_{i_1}\, \dots \, t_{i_a})$.  Therefore once $d$ is paired, this face is no longer mixed.  By Lemma \ref{lem:baddartsactivefaces} this was the only mixed partial face at the start of this step, so the map at the start of the next step is bad.
    
    Let $d'$ be the active dart at the next step, which is step $k+1$.  If the
	active dart $d'$ is bad, then we are in case 1 and we may add more faces.
	In this case, the partial map at the start of step $k+2$ will also be bad.
	This will continue until we eventually arrive at some step $j$, with $j >
	k$, where the partial map at the start of this step is bad and the active
	dart is not bad, or the whole map will be completed before we reach such a
	step $j$. If this step $j$ exists, and $\hat{d}$ is the active dart, then because the map is bad, we have
	$u_{m_\pi^{(j-1)}}(\hat{d}) \in S^u$ and $u^{-1}_{m_\pi^{(j-1)}}(\hat{d})
	\in S^u$, so there will be no choice of
	pairing dart that adds a face.   Therefore $\E[F_{m_\pi^{(j-1)}}] = 0$.
	Grouping together these terms we obtain $\E[F_{m_\pi^{(k-1)}}] +
	\E[F_{m_\pi^{(j-1)}}] \leq \tfrac{2}{n-k+1} < \tfrac{1}{n-k+1} +
	\tfrac{1}{n-j+1}$, with the intermediate terms $\E[F_{m_\pi^{(i-1)}}] \leq
	\tfrac{1}{n-i+1}$.  Thus
	\begin{equation}\label{eq:case3run}
		\sum_{i=k}^j \E[F_{m_\pi^{(i-1)}}] \leq \sum_{i=k}^j \frac{1}{n-i+1}.
	\end{equation}
	If the index $j$ exists, we call the above described run between $k$ and $j$
	of the algorithm a \emph{closed run}, and a \emph{open run} otherwise.  The
	algorithm may have several closed runs, and potentially one open run.  The
	expectations for added faces for closed runs all satisfy
	\eqref{eq:case3run}.  Then, by definition, all remaining iterations of the
	algorithm satisfy Cases 1 or 2, in which case
	\begin{equation*}
		\sum_{k=1}^n \E[F_{m_\pi^{(k-1)}}] \leq \sum_{k=1}^{n} \frac{1}{n-k+1}
		= H_n < H_n+1.
	\end{equation*}
	If there is an open run beginning at the $k^\upth$ iteration, then
	\begin{equation*}	
		\sum_{i=k}^n \E[F_{m_\pi^{(i-1)}}] \leq \frac{2}{n-k+1} + \sum_{i=k+1}^n
		\frac{1}{n-i+1} \leq \left(\sum_{i=k}^n \frac{1}{n-i+1}\right) + 1,
	\end{equation*}	
	in which case we also obtain 
	\begin{equation*}
		\sum_{k=1}^n \E[F_{m_\pi^{(k-1)}}] \leq \left(\sum_{k=1}^n
			\frac{1}{n-k+1} \right) + 1 \leq H_n + 1.
	\end{equation*}	
	In either case, the result is proved.
\end{proof}

\subsection{Proving the lower bound with RPB}
Again, throughout this section $\alpha = (\alpha_1, \ldots)$ and $\beta =
(\beta_1, \ldots)$ are partitions of some positive integer $n$ with no parts of
size 1.  Recall that for each $0 \leq j \leq \ell(\alpha)$
that $\alpha'_j = \sum_{i=1}^j \alpha_i$, with $\alpha'_0 = 0$.

We prove the lower bound by analysing RPB.
In this case the active dart at the beginning of step $k$ is always $s_k$.  Let
$B_k$ be the random variable for the number of completed faces added to the
partial map at step $k$.  Thus $B_k$ is similar to $A_k$ except it applies to
RPB.  As for $A_k$, we clearly have
\begin{equation}  \label{lem:bkdecomp}
	\E[F_{\alpha,\beta}] = \sum_{k=1}^n \E[B_k].
\end{equation}
%Since this random process has $n!$ different outputs, for each possible $m_\pi \in M_{\alpha,\beta}$ we know the partial map obtained at each standard step of the run of the random process which outputted $m_\pi$.  Therefore we can let $m_\pi^{(k)}$ be the partial map at the end of the $k^{th}$ step of the run of Random Process B which outputs $m_\pi$.  Let $m_\pi^{(0)}$ be the partial map at the start of Random Process B, which has no paired darts. 

%\begin{proof}
%    Each face in the final map $m_\pi$ is added at some step of the random process.  Therefore it is counted by exactly one of the $B_k$.  The result then follows from linearity of expectation.
%\end{proof}
We will need some additional variables on RPB.  Let $m$ be the partial map
at the start of step $k$ of RPB.  Note that $m$ has $k-1$ edges and
$2(n-k+1)$ unpaired darts.
    \begin{itemize}
        \item Let $O_k$ be the random variable for the number of bad darts in $T^u$ in $m$.
        \item Let $b_k$ be the event that $m$ is bad.
    \end{itemize}
Thus each variable $O_k$ and $b_k$ count their relevant quantities at the
beginning of step $k$ of the algorithm.
Before we find estimates on these new definitions, we make an observation about
RPB.  These observations are based on the simple choice of active dart in step
2.
\begin{observation}\label{thm:obs2}
		Let $m$ be a map at the beginning of step $k$.  Suppose the active dart
		$s_k$ is at the $j^\upth$ vertex, so  $k=\alpha_j' + r$ for some $0 \leq r \leq
		\alpha_{j+1}-1$.  Then all the darts at the $j^\upth$ vertex after $s_k$
		are unpaired, while all the darts before are paired.   Furthermore, all
		the darts at the $i^\upth$ vertex on the
		left hand side are paired if $i < j$ and unpaired if $i > j$.  It follows that a cycle of
		$u_m$ is $(s_k \cdots s_{\alpha'_{j+1}}\, t_{i_1} \cdots t_{i_p})$ for
		some set of darts $M=\{t_{i_1}, \ldots, t_{i_p}\} \subseteq T$.  This is the only potentially
		mixed partial face of $m$, and $m$ is bad if and only if $M$ is empty.  In the special case where $r =
		1$, the set $M$ is necessarily empty, and the above cycle of $u_m$ has the form
		$(s_{\alpha'_j+1} \cdots s_{\alpha'_{j+1}})$.
\end{observation}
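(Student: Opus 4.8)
The plan is to derive everything from the fixed processing order of RPB together with the definitions of $R$, $E(\pi)$, and the unpaired permutation $u_m$; no probabilistic input is needed, since the claim is about the \emph{shape} of every partial map that can occur at the start of a step of RPB.

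First I would record which darts are paired. In RPB the active darts are processed in the order $s_1, s_2, \dots, s_n$, and step $k$ pairs $s_k$; so at the start of step $k$ the paired darts of $S$ are exactly $s_1, \dots, s_{k-1}$ and the unpaired ones are exactly $s_k, \dots, s_n$. Writing $s_k$ as a dart of the $j^\upth$ vertex, so $k = \alpha_j' + r$, this already yields the first two assertions: among the darts $s_{\alpha_j'+1}, \dots, s_{\alpha_{j+1}'}$ of the $j^\upth$ vertex those with index less than $k$ are paired and those with index greater than $k$ are unpaired; every dart of the $i^\upth$ left vertex has index at most $\alpha_j' < k$ when $i < j$ and so is paired, while every dart of the $i^\upth$ left vertex has index at least $\alpha_{j+1}'+1 > k$ when $i > j$ and so is unpaired. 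On the right side we only know that some $k-1$ darts of $T$ are paired.

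Next I would compute the cycle of $u_m$ through $s_k$ by tracing $R\cdot E(\pi)$ and skipping paired darts, using the convention, visible from Example \ref{ex:mapproduct}, that $R\cdot E$ applies $R$ first, together with the fact that $R$ acts on $S$ as $\sigma_0$. For $k \le i < \alpha_{j+1}'$ one has $(R\cdot E)(s_i) = E(s_{i+1}) = s_{i+1}$ because $s_{i+1}$ is unpaired, so $u_m$ contains the chain $s_k \to s_{k+1} \to \dots \to s_{\alpha_{j+1}'}$. Then $(R\cdot E)(s_{\alpha_{j+1}'}) = E(s_{\alpha_j'+1})$. If $r=1$ this equals $E(s_k) = s_k$, the cycle closes immediately, and it has the form $(s_{\alpha_j'+1}\cdots s_{\alpha_{j+1}'})$ with $M = \emptyset$, proving the special case. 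If $r > 1$ then $s_{\alpha_j'+1}$ is paired, so the walk leaves $S^u$, runs through a (possibly empty) block of paired darts, and returns to $S^u$ at the next unpaired dart; the crucial point is that that dart, and every unpaired dart met before the cycle closes back at $s_k$, lies in $T$. This follows because for a dart $s_m$ of a vertex $i > j$ all darts of vertex $i$ are unpaired, whence $u_m(s_m) = s_{\sigma_0(m)}$ and the $\sigma_0$-cycle of the $i^\upth$ vertex is a full cycle of $u_m$ contained in $S^u$; since the darts $s_k, \dots, s_{\alpha_{j+1}'}$ have already been placed in the chain above, no further unpaired $S$-dart can appear in the cycle of $s_k$. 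Hence that cycle is $(s_k\cdots s_{\alpha_{j+1}'}\,t_{i_1}\cdots t_{i_p})$ for some $M = \{t_{i_1},\dots,t_{i_p}\}\subseteq T$.

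Finally, the same computation classifies every cycle of $u_m$ that meets $S^u$: it is either the cycle of $s_k$ or a $\sigma_0$-cycle of some vertex $i > j$, the latter lying entirely in $S^u$. So the cycle of $s_k$ is the only partial face that can be mixed, and it is mixed exactly when $p \ge 1$, i.e.\ when $M \neq \emptyset$; thus $m$ is bad precisely when $M = \emptyset$. The only delicate step is the middle paragraph — fixing the composition order and verifying that the excursion through paired darts cannot re-enter $S^u$ except to close the cycle at $s_k$; the rest is immediate bookkeeping from the order in which RPB processes darts.
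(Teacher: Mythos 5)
Your argument is correct and follows exactly the reasoning the paper intends: the paper states this as an Observation without a written proof, relying precisely on the bookkeeping you carry out (RPB pairs $s_1,\dots,s_{k-1}$ first, later vertices give full $S^u$-cycles of $u_m$, and tracing $R\cdot E$ along the current vertex yields the cycle $(s_k \cdots s_{\alpha'_{j+1}}\, t_{i_1}\cdots t_{i_p})$). Your verification of the composition order of $R\cdot E$ against Example \ref{ex:mapproduct} and the disjoint-cycle argument excluding further $S^u$-darts are sound, so nothing is missing.
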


We now give estimates on $O_k$ and $b_k$.

\begin{lemma} \label{lem:baddartsestimate}
    If $k \neq \alpha'_j + 1$ for all $j$, then $\E[O_k] \leq 3$ and $Pr[b_k] \leq \frac{4}{n-k+2}$.
If $k = \alpha'_j + 1$ for some $j$ or $k=1$, then $\E[O_k] \leq 3 + \frac{3}{n-k+2}$ and $Pr[b_k] = 1$.
\end{lemma}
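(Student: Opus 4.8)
The plan is to analyze RPB step by step, tracking the two quantities $O_k$ (bad darts in $T^u$) and the probability of being bad, using Observation \ref{thm:obs2} as the structural backbone. The key insight from that observation is that at the start of step $k$, the only potentially mixed partial face of the partial map $m$ has the form $(s_k \cdots s_{\alpha'_{j+1}}\, t_{i_1} \cdots t_{i_p})$, and $m$ is bad exactly when this face contains no $t$-darts. So I would first handle the easy case $k = \alpha'_j + 1$ (or $k=1$): here $r=1$, so by Observation \ref{thm:obs2} the set $M$ is necessarily empty, the map is automatically bad, hence $Pr[b_k] = 1$. The bound $\E[O_k] \le 3 + \tfrac{3}{n-k+2}$ in this case would come from the general recursive estimate below plus one extra contribution: when the active dart at the previous step $k-1$ (which is $s_{\alpha'_{j+1}'}$, the last dart of vertex $j-1$... actually $s_{k-1}$ at vertex $j-1$) is paired, it may create a bad dart in $T^u$, and that happens with probability at most $\tfrac{3}{n-k+2}$ since only darts at distance $\pm 2$ in $u$ do this and there are at most a bounded number of them near the relevant face.

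**The core of the argument is the recursive estimate on $\E[O_k]$.** I would set up a one-step analysis: given the partial map $m$ at step $k$ with active dart $s_k$, pairing $s_k$ with a uniformly random $t_j \in T^u$ produces the map at step $k+1$. By Observation \ref{thm:obs}.2 (for the non-bad active dart), a new bad dart in $T^u$ is created only if the pairing dart is $u^2(s_k)$ or $u^{-2}(s_k)$ — contributing at most a small expected increment, roughly $\tfrac{2}{n-k+1}$ or so — while a bad dart in $T^u$ can be *destroyed* only if it is chosen as the pairing dart, which happens with probability $O_k/(n-k+1)$. This gives a recursion of the shape $\E[O_{k+1}] \le \E[O_k]\bigl(1 - \tfrac{1}{n-k+1}\bigr) + \tfrac{c}{n-k+1}$ for a constant $c$; iterating and checking the base case shows $\E[O_k] \le 3$ (and the slightly weaker $3 + \tfrac{3}{n-k+2}$ right after a vertex boundary). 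I would need to be careful about when exactly bad darts in $T^u$ get consumed: by the choice of active dart in RPB they are *never* selected as active darts, so they only disappear by being chosen as pairing darts — this is what keeps the recursion from having a helpful decay term and forces the constant to be a genuine bounded quantity rather than $o(1)$.

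**For the bound $Pr[b_k] \le \tfrac{4}{n-k+2}$ when $k \ne \alpha'_j+1$**, I would argue as follows. By Observation \ref{thm:obs2}, $m$ at step $k$ is bad iff the mixed-face candidate $(s_k \cdots s_{\alpha'_{j+1}} \, t_{i_1}\cdots t_{i_p})$ has $p = 0$. The map at step $k-1$ had active dart $s_{k-1}$ (at the same vertex $j$, since $k \ne \alpha'_j+1$), and was paired with a uniform $t$ in $T^u$; I would show that the step-$k$ map is bad only if that pairing dart was one of at most $4$ specific darts — essentially $u_{m_{k-1}}(s_{k-1})$, $u_{m_{k-1}}^{-1}(s_{k-1})$ and a couple of neighbors governing whether the face closes off into something containing no $t$-dart — or else the previous map was already bad in a way that propagated, which by induction happens with small probability too. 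Combining, $Pr[b_k] \le \tfrac{4}{n-k+2}$ since there are $n-k+2$ choices of pairing dart at step $k-1$.

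**The main obstacle** I expect is making the recursion for $\E[O_k]$ genuinely self-contained: one must simultaneously control how $O_k$ and $b_k$ interact, because whether a face-completion at step $k$ removes *two* bad darts (Observation \ref{thm:obs}.1) depends on configuration, and bad darts in $T^u$ can also be *created* as a side effect when the active dart $s_k$ is processed and lies in a length-2 partial face. Getting the constant to be exactly $3$ (rather than some larger absolute constant) will require carefully bookkeeping the worst case — I anticipate the clean statement comes from the fact that at any moment at most two of the "freshly created" bad darts can be in $T^u$ together with at most one "inherited" one, or a similar small-case enumeration. I would organize this as: (i) prove the base cases $k=1$ and the first step of each vertex directly; (ii) establish the one-step inequalities relating $(O_{k+1}, b_{k+1})$ to $(O_k, b_k)$ from Observations \ref{thm:obs} and \ref{thm:obs2}; (iii) solve the resulting recursions, checking the harmonic-type sums stay bounded by $3$.
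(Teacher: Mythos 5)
Your overall strategy --- a step-by-step induction over RPB that simultaneously tracks $\E[O_k]$ and $Pr[b_k]$, driven by Observations \ref{thm:obs} and \ref{thm:obs2} --- is the same as the paper's, but two quantitative steps in your sketch do not go through as stated. First, for the bound $Pr[b_k]\le \frac{4}{n-k+2}$ you claim the step-$k$ map can be bad only if the pairing dart at step $k-1$ was one of ``at most $4$ specific darts,'' with bad-map propagation handled separately as a small-probability event. In RPB bad darts in $T^u$ are never chosen as active darts, so their number is \emph{not} deterministically bounded (it is only bounded in expectation); hence no fixed list of $4$ darts works. The paper's accounting is: if the previous map is not bad, exactly \emph{one} pairing dart ($t_{i_1}$ in the notation of Observation \ref{thm:obs2}) produces a bad map; if it is bad, the new map is bad iff the pairing dart is bad, and one uses $Pr[b_k]\,\E[O_k\mid b_k]\le \E[O_k]\le 3$ to get the total $\frac{1+\E[O_k]}{n-k+1}\le\frac{4}{n-k+1}$. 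Your alternative of adding $Pr[b_{k-1}]$ as a separate ``small'' term gives roughly $\frac{4}{n-k+2}+\frac{4}{n-k+3}$ and the induction no longer closes at the stated constant.

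Second, your recursion $\E[O_{k+1}]\le \E[O_k]\bigl(1-\tfrac{1}{n-k+1}\bigr)+\tfrac{c}{n-k+1}$ is not valid at the steps $k=\alpha'_j$ when the partial map is bad: there the active dart $s_k$ is itself bad, and (Observation \ref{thm:obs}.1) a new bad dart in $T^u$ is created whenever the pairing dart lies in a partial face of length $2$, which for $\beta=2^{n/2}$ at early steps happens with probability close to $1$, not $O\bigl(\tfrac{1}{n-k+1}\bigr)$. The paper handles this by adding a crude $+1$ in that configuration weighted by $Pr[b_k]\le\frac{4}{n-k+2}$, which is exactly where the coupling between the two inductive quantities is indispensable, and it is also where the extra $\frac{3}{n-k+2}$ in the statement at $k=\alpha'_j+1$ comes from (together with the fact that at $k=\alpha'_j$ both $u^{2}(s_k)$ and $u^{-2}(s_k)$ may lie in $T^u$, whereas at all other steps only one of them can). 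You flag this interaction as ``the main obstacle'' but do not resolve it, and without the case split on $k$ relative to the vertex boundaries ($k\ne\alpha'_j,\alpha'_j+1$; $k=\alpha'_j$; $k=\alpha'_j+1$) neither the constant $3$ nor the constant $4$ can be extracted from your generic fixed-point argument.
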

\begin{proof}
    Our proof is by induction on $k$, the steps of the algorithm.  Since $\alpha$ and
    $\beta$ have no parts of size one, we have $\E[O_1]  = \E[O_2] = 0$, $P[b_1]
    = 1$ and $Pr[b_2] = 0$.  Now suppose the conditions of the lemma hold for
	$O_j$ and $b_j$ for all $j \leq k$.  We prove the result holds after the
	$k^\upth$ step is completed (the beginning of the $(k+1)^\upst$ step).  Denote the
	map at the beginning of the $k^\upth$ step $m$; the active dart in $m$ is
	$s_k$ and there are $n-k+1$ unpaired darts in $T^u$ that $s_k$ could be
	paired with at this step.  We set $u=u_m$ to be the unpaired permutation.

Case 1: $k \neq \alpha'_j$ and $k \neq \alpha'_j + 1$ for all $j$.
From Observation \ref{thm:obs},  we add a bad dart at this step if and only if the pairing dart is $u^2(s_k)$ or
$u^{-2}(s_k)$.  However, by our choice of $k$, we have $u(s_k) \in S^u$, so 
pairing $s_k$ with $u^2(s_k)$ would add a bad dart in $S^u$ in the resulting map.  Therefore there is only
one choice of pairing dart for $s_k$ that could add a bad dart in $T^u$.
Separately, if the pairing dart is a bad dart, then the pairing dart is no longer bad in the
resulting partial map.  Since $k \neq \alpha'_j+1$, we have $\E[O_k] \leq 3$, so
\begin{align*}
	\E[O_{k+1}] &\leq \E[O_k]  - \frac{\E[O_k]}{n-k+1} + \frac{1}{n-k+1} \\
&\leq 3 \left(1 - \frac{1}{n-k+1} \right) + \frac{1}{n-k+1} < 3.
\end{align*}
 
%If the partial map $m$ at the beginning of the $k^\upth$ step is bad, then the
%partial map at the $k+1$-st step will be bad if and only if the pairing dart is
%bad.  See Figure \ref{fig:badmapmakingfrombad} for an example of this. 
%If $m$ is not bad, then by Observation \ref{thm:obs2}, the only mixed partial
%face is the one containing the active dart $s_k$, and the only pairing that
%results in a bad map is $t_{i_1}$ (referring to the notation in Observation
%\ref{thm:obs2}). 
%If $m$ is not bad, then let
%$v$ be the vertex incident with the active dart $s_k$.  Let $d$ be the dart with
%largest index at $v$.  By our choice of $k$, we know $d \neq s_k$ and $d$ unpaired.  Then the resulting map after $s_k$ is
%paired is bad if and only if the pairing dart is $u(d)$.  See Figure \ref{fig:badmapmakingfromgood} for an example.  
%Since $k \neq \alpha'_j + 1$ we can use the inductive assumption and total probability to obtain
If the partial map $m$ at the beginning of the $k^\upth$ step is bad, then the
partial map at the $(k+1)^\upst$ step will be bad if and only if the pairing dart is
bad.   If $m$ is not bad, then by Observation \ref{thm:obs2} the only mixed partial
face is the one containing the active dart $s_k$, and the only pairing dart that
results in a bad map is $t_{i_1}$ (in the notation of Observation \ref{thm:obs2}). 
Both cases of $m$ being bad and not bad are illustrated in Figure
\ref{fig:badmapmakingfromgood}.
\begin{figure}
	\begin{center}
    \includegraphics[scale=0.8]{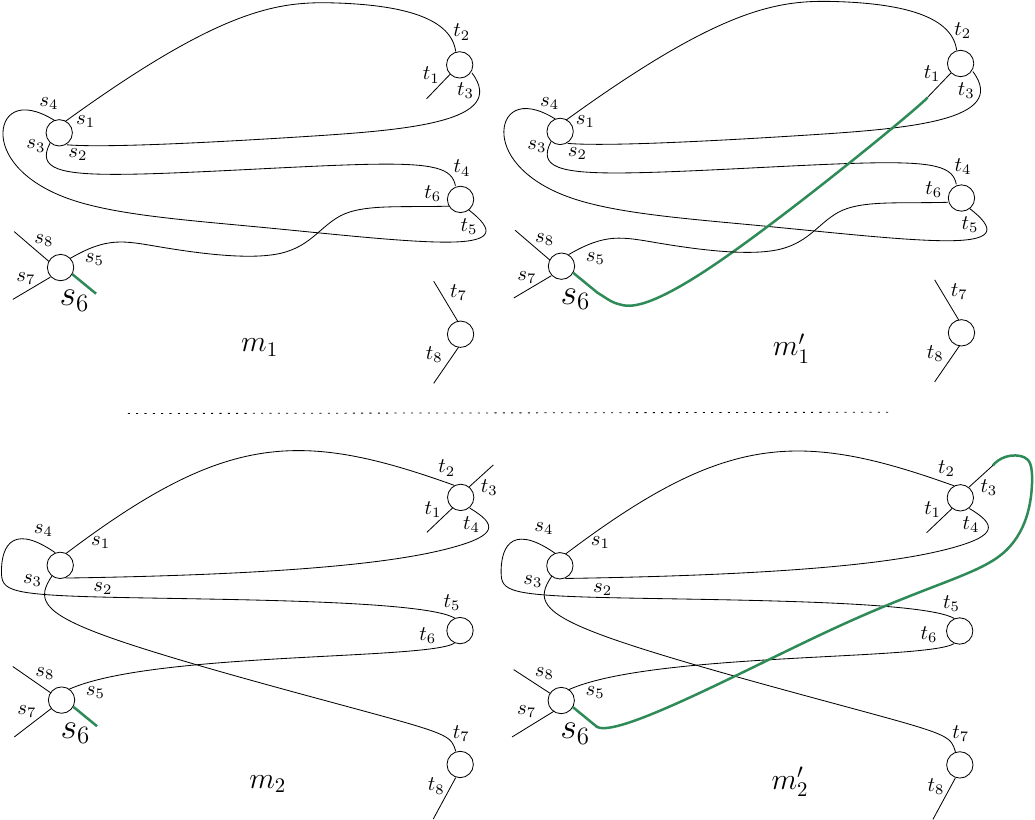}
	\end{center}
	\caption[hiya]{On the left are possible maps at the beginning of the $6^\upth$ iteration of the RPB, while the
		primed maps on the right are their output after the $6^\upth$ step is
		executed.  In both cases $s_6$ (green) is the active dart.  At top left,
		we have $u_{m_1} = (s_6\, s_7\, s_8)(t_1)(t_7\, t_8)$, so the map is bad.  The dart $s_6$ is paired with
		the bad dart $t_1$ to obtain $m_1'$, which is bad.  The reader can
		confirm that pairing $s_6$ with either dart $t_7$ or $t_8$ (neither dart
		is bad) instead of $t_1$ does not produce a bad map.\newline
		At bottom left, we have $u_{m_2} = (s_6\, s_7\, s_8\, t_3\, t_8)(t_1)$,
		so $m_2$ has a mixed partial face and is not bad.  Note the mixed
		partial face contains the active dart.  Furthermore, the dart $t_3$ plays the
		role of $t_{i_1}$ in Observation \ref{thm:obs2}, and if $s_6$ is paired
		with $t_3$, we get the bad map $m_2'$.  The reader can check that pairing
	$s_6$ with $t_1$ or $t_8$ instead of $t_3$ does not give a bad map.\label{fig:badmapmakingfromgood}
 }
%	Suppose $m$ is the partial map on the left, this map is not bad.  Suppose
%$s_6$ is the active dart.  The dart $s_8$ is the unpaired dart with the largest
%index at the same vertex as $s_6$.  We also have that $u(s_8) = t_8$.  If the
%pairing dart is chosen to be $t_8$, then we obtain the bad partial map on the
%right.
%\begin{figure}
%    \centering
%    \includegraphics[scale=0.8]{badmapmakingexample.pdf}
%    \caption{Suppose $m$ is the partial map on the left, this map is not bad.  Suppose $s_6$ is the active dart.  The dart $s_8$ is the unpaired dart with the largest index at the same vertex as $s_6$.  We also have that $u(s_8) = t_8$.  If the pairing dart is chosen to be $t_8$, then we obtain the bad partial map on the right.}
%\label{fig:badmapmakingfromgood}
    %\includegraphics[scale = 0.8]{}
    %\caption{Now suppose $m$ is the partial map on the left, this map is bad.  Suppose $s_5$ is the active dart.  If we choose the pairing dart to be a bad dart, then the map $m'$ obtained will be bad also.  In this case, choosing $t_4$ to be the pairing dart give the partial map on the right, which is bad.}
    %\label{fig:badmapmakingfrombad}
\end{figure}
Since $k \neq \alpha'_j + 1$, we can use the inductive assumption and total probability to obtain
\begin{align*}
	Pr[b_{k+1}] &= Pr[b_{k+1} \mid b_k] Pr[b_k] + Pr[b_{k+1} \mid b_k^c] Pr[b_k^c] \\
&\leq Pr[b_k] \frac{\E[O_k \mid b_k]}{n-k+1} + Pr[b_k^c] \frac{1}{n-k+1} \\
&\leq Pr[b_k] \frac{\E[O_k \mid b_k]}{n-k+1} + Pr[b_k^c] \frac{1 + \E[O_k \mid b_k^c]}{n-k+1} \\
&\leq \frac{1 + \E[O_k]}{n-k+1} \leq \frac{4}{n-k+1}.
\end{align*}

Case 2:  $k = \alpha'_j$ for some $j$.
In this case the active dart $s_k$ is the last unpaired dart at its vertex.  If
the partial map at this step is not bad, then by Observation \ref{thm:obs} both $u^2(s_k)$ and $u^{-2}(s_k)$
are valid choices of pairing dart that add a bad dart to the resulting map.  Therefore we expect to
add $\frac{2}{n-k+1}$ bad darts in this case.  If the partial map at this step
is bad, then $s_k$ itself is a bad dart.  In this case we add one new bad dart
if and only if the pairing dart is in a partial face containing two unpaired
darts.  If we are at an early step and $\beta = 2^{n/2}$, then almost every dart
could satisfy this.  Therefore we use a rough estimate and assume any choice of
pairing dart will add a bad dart.  Regardless of if the partial map at this step
is bad or not, pairing $s_k$ with any bad dart will result in a map with one
fewer bad dart (the pairing dart will no longer be bad).  This gives 
\begin{align*}
	\E[O_{k+1}] &\leq \E[O_{k+1} \mid b_k] Pr[b_k] + \E[O_{k+1} \mid b_k^c]
	Pr[b_k^c]\\
		&\leq (\E[O_k \mid b_k] + 1) Pr[b_k] + \left(\E[O_k \mid b_k^c] + \frac{2}{n-k+1}\right)Pr[b_k^c] - \frac{\E[O_k]}{n-k+1} \\
		&= \E[O_k]\left( 1 - \frac{1}{n-k+1} \right) + Pr[b_k] \left( 1 -
			\frac{2}{n-k+1} \right) + \frac{2}{n-k+1}\\
		&\leq 3 \left( 1 - \frac{1}{n-k+1} \right) + \frac{4}{n-k+2} \left( 1 -
			\frac{2}{n-k+1} \right) + \frac{2}{n-k+1}\\
&= 3\left( 1 - \frac{1}{n-k+1} \right) + \frac{6}{n-k+1}- 
\frac{12}{(n-k+1)(n-k+2)}\\
&\leq 3 + \frac{3}{n-k+1}.
\end{align*}
At the next step, the active dart is $s_{k+1}$, and $k+1 = \alpha'_j+1$.  Then
by Observation \ref{thm:obs2} all of the darts at the vertex incident with $s_{k+1}$ are unpaired.  Therefore the partial map is necessarily bad.  This means $Pr[b_{k+1}] = 1$.

Case 3: We have $k = \alpha'_j + 1$ for some $j$.
In this case, again, the active dart $s_{k}$ is at a vertex
with all unpaired darts by Observation \ref{thm:obs2} and the map is bad.  Therefore there are no choices of pairing dart that
add a bad dart in $T^u$.  However, choosing a pairing dart that is bad will
remove this bad dart from the resulting map.  Therefore using induction we have
\begin{align*}
	\E[O_{k+1}] &\leq \left(1 - \frac{1}{n-k+1}\right) \E[O_k] \\
&\leq \left(1 - \frac{1}{n-k+1}\right) \left(3 + \frac{3}{n-k+2}\right)\\ 
&< 3 \left(\frac{n-k}{n-k+1}\right) \left(\frac{n-k+1}{n-k}\right) = 3.
\end{align*}

Since the partial map at the start of this step is bad, the partial map at the
next step will be bad if and only if the pairing dart is bad.  Therefore we
obtain 
\begin{align*}
	Pr[b_{k+1}] \leq \frac{\E[O_k]}{n-k+1} \leq \frac{3 + \frac{3}{n-k+2}}{n-k+1} \leq \frac{4}{n-k+1}.
\end{align*}
Note that when $n-3 \leq k \leq n$, the right hand side of the previous equation
is greater than or equal to 1, so it provides no interesting bound for $Pr[b_{k+1}]$.

This covers all the cases and hence completes the induction.
\end{proof}

We are now ready to give the lower bound.

\begin{proof} [Proof of the lower bound in Theorem \ref{thm:mainbound}]
	We estimate $\E[B_k]$ for each $k$ and then use \eqref{lem:bkdecomp}.  First
	observe that $\E[B_n] \geq 1$ and $\E[B_1] = 0$.  Second, for the special cases when $k = \alpha_j' + 1$ for some $j$, we find by Observations \ref{thm:obs} and \ref{thm:obs2} that the map at the
	beginning of the $k^\upth$ iteration is bad and that $\E[B_k] = 0$.

	Next, using total probability, we have 
	\begin{equation}\label{eq:totprob}
		\E[B_k] = \E[B_k \vert b_k] Pr[b_k] + \E[B_k \vert b_k^c] Pr[b_k^c] \geq
		\E[B_k \vert b_k^c] (1 - Pr[b_k]).
	\end{equation}
	To estimate the right hand side of \eqref{eq:totprob}, we use the estimate
on $Pr[b_k]$ from Lemma \ref{lem:baddartsestimate}, but note that estimate is only
useful when $k \leq n-3$ (otherwise the stated estimate on $Pr[b_k]$ is greater
than or equal to 1). Thus when $k = n-2$ or $n-1$, we use the coarse lower bound
of 0 for $\E[B_k]$.   Thus we have so far
\begin{align}
	&\E[B_1], \E[B_{n-2}] \textnormal{ and } \E[B_{n-1}] \geq 0,\notag\\
	&\E[B_n] \geq 1,\label{eq:cases}\\ 
	\textnormal{ and } &\E[B_k] = 0 \textnormal{ when } k = \alpha'_j + 1 \textnormal{ for some } j.\notag
\end{align}
For the remaining values of $k$, we estimate \eqref{eq:totprob} in two
cases:  when $k \neq \alpha_j'$ for all $j$ and when $k = \alpha'_j$ for some
$j$.  At the beginning of the $k^\upth$ step, we let $m$ be the partial map and
$v$ the vertex with the active dart $s_k$.  Since we are estimating $\E[B_k \vert b_k^c]$, we
assume $m$ is not bad.

    Case 1: $k \neq \alpha'_j$ for any $j$.  
	Then since $m$
	is not bad, by Observation \ref{thm:obs2} we have that the active dart $s_k$
	is in the lone mixed partial face, so $u^{-1}(s_k) \in T^u$, while $u(s_k)
	\in S^u$, from which it follows that $s_k$ is not bad.  But then from
	Observation \ref{thm:obs} it follows that the lone choice of pairing
	dart that adds a face is $u^{-1}(s_k)$.  So
	\begin{align*}
    \E[B_k | b_k^c] &\geq \frac{1}{n-k+1},
    \end{align*}
whence
\begin{align*}
	\E[B_k \vert b_k^c] (1 - Pr[b_k]) &\geq \frac{1}{n-k+1} \left(1 - \frac{4}{n-k+2} \right) \\
&= \frac{1}{n-k+1} - \frac{4}{(n-k+1)(n-k+2)}.
\end{align*}

    Case 2: $k = \alpha'_j$ for some $j$.  Since $m$ is not bad, by Observation
	\ref{thm:obs2} the active dart $s_k$ is in a mixed face, so $s_k$
    is not bad.  By
    Observation \ref{thm:obs} there are two choices of pairing dart that add a
    completed face: $u(s_k)$ and $u^{-1}(s_k)$.  Both of these may be possible as
    choices of pairing dart, since $s_k$ being the last unpaired dart at $v$
    implies $u(s_k), u^{-1}(s_k) \in T^u$.  As noted in Observation
    \ref{thm:obs}, if $u(s_k) = u^{-1}(s_k)$, then two faces are added.   Hence 
	\begin{align*}
    \E[B_k | b_k^c] &\geq \frac{2}{n-k+1} = \frac{1}{n-k} + \frac{1}{n-k+1} - \frac{1}{(n-k)(n-k+1)}.
    \end{align*}
This gives 
\begin{align*}
    \E[B_k \vert b_k^c] (1 - Pr[b_k]) &\geq \left(\frac{1}{n-k} + \frac{1}{n-k+1} - \frac{1}{(n-k)(n-k+1)}\right)\left(1- \frac{4}{n-k+2}\right) \\
%&= \frac{1}{n-k} + \frac{1}{n-k+1} - \frac{1}{(n-k)(n-k+1)} \\
%&\;\;\;\;\; - \frac{4}{(n-k)(n-k+1)}- \frac{4}{(n-k+1)^2} + \frac{4}{(n-k)(n-k+1)^2}\\
&> \frac{1}{n-k+1}  + \frac{1}{n-k} - \frac{4}{(n-k+1)(n-k+2)} \\  
&\;\;\;\;\; - \frac{4}{(n-k)(n-k+2)} - \frac{1}{(n-k)(n-k+1)}\\
&> \frac{1}{n-k+1} - \frac{4}{(n-k+1)(n-k+2)} \\
&\;\;\;\;\; + \frac{1}{n-k} -
\frac{5}{(n-k)(n-k+1)}.
\end{align*}
Let 
\begin{align*}
	C&=\{\alpha'_j : j=1, \ldots, \ell(\alpha)-1 \textnormal{ and }
	\alpha'_j \leq n-3\}\\ 
	\textnormal{ and } D&=\{\alpha'_j+1 : j=1, \ldots, \ell(\alpha)-1 \textnormal{ and }
	\alpha'_j \leq n-4 \}, 
\end{align*}
and recall the identity
\begin{equation*}
	\sum_{k=i}^m \frac{1}{k(k+1)} = \frac{1}{i} - \frac{1}{m+1}.
\end{equation*}
Then, using \eqref{lem:bkdecomp}, \eqref{eq:totprob} and \eqref{eq:cases} and summing over $k$ gives:
    \begin{align*}
		\E[F_{\alpha,\beta}] & = \sum_{k=1}^n \E[B_k] = 1+\sum_{k=2}^{n-3} \E[B_k \vert b_k^c] (1 - Pr[b_k]) \\
&\geq 1 + \sum_{k=2 \atop k \notin C,D}^{n-3} \left(\frac{1}{n-k+1} -
\frac{4}{(n-k+1)(n-k+2)}\right)\\
& \;\;\;\;\;\; + \sum_{k \in C} \left(\frac{1}{n-k+1} -
\frac{4}{(n-k+1)(n-k+2)} + \frac{1}{n-k} - \frac{5}{(n-k)(n-k+1)} \right)\\
&= 1 + \sum_{k=2 \atop k \notin C,D}^{n-3} \left(\frac{1}{n-k+1} -
\frac{4}{(n-k+1)(n-k+2)}\right)\\
& \;\;\;\;\;\; + \sum_{k \in C,D} \left(\frac{1}{n-k+1} -
\frac{5}{(n-k+1)(n-k+2)}  \right)  \\
&> 1 + \sum_{k=2}^{n-3} \frac{1}{n-k+1}  - 5\sum_{k =2}^{n-3} \frac{1}{(n-k+1)(n-k+2)}\\ 
&= H_n - \frac{1}{2} - \frac{1}{3} - \frac{1}{n}  - 5\left(\frac{1}{4} -
\frac{1}{n}\right) = H_n - \frac{25}{12} + \frac{4}{n} > H_n -3.\qedhere
	\end{align*}
\end{proof}

\section*{Acknowledgements}

Both authors would like to thank \'E. Fusy for helpful comments. In particular his
comments on Section \ref{sec:rpa} helped us improve proofs for the theorems
of that section.

\bibliographystyle{alpha}
\bibliography{cites}

\end{document}